\documentclass[12pt, reqno]{amsart}

\usepackage{verbatim} 

\usepackage{amsmath, amssymb, amsfonts, amsbsy, amsthm, latexsym}
\usepackage{mathabx}
\usepackage[normalem]{ulem}
\usepackage{algorithm}
\usepackage{algorithmic}

\usepackage{enumitem}

\usepackage[dvipsnames]{xcolor}
\usepackage{graphicx}
\usepackage{caption}
\usepackage{subcaption}

\newtheorem{theorem}{Theorem}[section]
\newtheorem{proposition}[theorem]{Proposition}
\newtheorem{lemma}[theorem]{Lemma}
\newtheorem{corollary}[theorem]{Corollary}

\DeclareMathOperator*{\argmax}{arg\,max}

\theoremstyle{definition}
\newtheorem{definition}[theorem]{Definition}
\newtheorem{example}[theorem]{Example}
\newtheorem{construction}[theorem]{Construction}
\newtheorem{remark}[theorem]{Remark}

\numberwithin{equation}{section}

\usepackage[colorlinks=true, linkcolor=blue, citecolor=blue, urlcolor=blue]{hyperref}

\def\C{{\mathbb{C}}}

\def\G{{\mathcal{G}}}

\def\N{{\mathbb{N}}}

\def\R{{\mathbb{R}}}
\def\Z{{\mathbb{Z}}}

\def\supp{{\mathrm{supp}}}
\usepackage{bbm}

\definecolor{bmcolor}{rgb}{0.9, 0.3, 0}

\definecolor{ikcolor}{rgb}{0, .4, .4}

\title{Gabor duals with minimal L1-norm}

\author{Rocío Díaz Martín}
\address{\textsc{Rocío Díaz Martín}\\ Department of Mathematics, Florida State University, Tallahassee, FL, USA}
\email{rdiazmartin@fsu.edu}

\author{Mark Lammers}
\address{\textsc{Mark Lammers}\\ Department of Mathematics and Statistics, University of North Carolina Wilmington, Wilmington, NC, USA}
\email{lammersm@uncw.edu}

\author{Brendan Miller}
\address{\textsc{Brendan Miller}\\ Department of Mathematics, Vanderbilt University, Nashville, TN, USA}
\email{brendan.miller@vanderbilt.edu}

\author{Alexander M. Powell}
\address{\textsc{Alexander M. Powell}\\ Department of Mathematics, Vanderbilt University, Nashville, TN, USA}
\email{alexander.m.powell@vanderbilt.edu}

\begin{document}

\begin{abstract}
    It is well-known that for a Gabor frame in $L^2(\mathbb{R})$, the canonical dual window is the dual window with the minimal $L^2$-norm. In this paper, we address the problem of finding dual windows for a Gabor frame that minimizes the $L^1$-norm. Since it has been shown that minimizing the $L^1$-norm often coincides with minimizing the $L^0$-norm, we also investigate the support of the dual Gabor window that achieves the  minimal $L^1$-norm.
\end{abstract}

\subjclass[2020]{42C15, 42C40.}

\maketitle

\section{Introduction and Preliminaries}\label{sec: Intro}

Gabor frames play a fundamental role in time-frequency analysis, providing flexible and stable representations of signals through redundant expansions. While dual Gabor frames are not unique, the canonical dual is distinguished by minimizing the $L^2$-energy of the expansion coefficients. In contrast, minimizing the $L^1$-norm tends to promote sparsity and localization, which are desirable in many applications. In this work, we investigate the problem of identifying dual Gabor windows with minimal $L^1$-norm, with particular emphasis on understanding their structure and support.

\subsection{Frames}
Frames are one of the central tools in signal processing analysis \cite{duffin1952class}. In particular, we say that a system of vectors $\{g_n\}_{n\in\mathbb{N}} \subseteq L^2(\mathbb{R})$ is a frame for $L^2(\mathbb{R})$ if there exist positive constants $A, B$ such that 
\begin{equation}\label{eq:frame_cond}
    A\|f\|_2^2 \leq \sum_{n\in\mathbb{N}} |\langle f, g_n \rangle|^2 \leq B\|f\|_2^2, \qquad \forall f \in L^2(\mathbb{R}),
\end{equation}
where $\langle f,g\rangle:=\int_\R f(x) \overline{g(x)}dx$. If only the upper bound in \eqref{eq:frame_cond} holds, we say that $\{g_n\}_{n\in\N}$ is a Bessel sequence. 
When both upper and lower bound conditions hold as in \eqref{eq:frame_cond}, they imply that the analysis operator $f \mapsto (\langle f, g_n \rangle)_{n\in\mathbb{N}}$ is continuous from $L^2(\mathbb{R})$ to $\ell^2$ and invertible on its range \cite[Thm 2.29 $\&$ Thm 8.29]{heil2011basis}, allowing for stable reconstructions of the form
\begin{equation}\label{eq:dual_frame}
    f = \sum_{n\in\mathbb{N}} \langle f, g_n \rangle \,  h_n, \qquad \forall f \in L^2(\mathbb{R}),
\end{equation}
for some system of vectors $\{h_n\}_{n\in\mathbb{N}} \subset L^2(\mathbb{R})$ called a \textit{dual frame}. In general, such a system $\{h_n\}_{n\in\mathbb{N}}$ satisfying \eqref{eq:dual_frame} is not unique. However, a canonical way of finding such a system of vectors relies on considering the frame operator $S: L^2(\mathbb{R}) \to L^2(\mathbb{R})$ defined as the composition of the analysis operator and its adjoint operator (the synthesis operator):
$$
    S f := \sum_{n\in\mathbb{N}} \langle f, g_n \rangle \, g_n, \qquad \forall f \in L^2(\mathbb{R}).
$$
Under the frame condition \eqref{eq:frame_cond},  $S$ is self-adjoint and invertible with a bounded inverse $S^{-1}: L^2(\mathbb{R}) \to L^2(\mathbb{R})$, giving that
$$
    f = SS^{-1}f = \sum_{n\in\mathbb{N}} \langle f, S^{-1}g_n \rangle \, g_n \quad \text{and} \quad f = S^{-1}Sf = \sum_{n\in\mathbb{N}} \langle f, g_n \rangle \, S^{-1}g_n, \qquad \forall f \in L^2(\mathbb{R})
$$
\cite[Lemma 5.1.5 $\&$ Thm 5.1.6]{christensen2003introduction}.
Thus, $\{S^{-1}g_n\}_{n\in\mathbb{N}}$ is a dual frame for $\{g_n\}_{n\in\mathbb{N}}$, called the \textit{canonical dual frame}. It is well known that the canonical dual frame satisfies the following inequality for every $f \in L^2(\mathbb{R})$:
$$
    \sum_{n\in\mathbb{N}} |\langle f, S^{-1}g_n \rangle|^2 \leq \sum_{n\in\mathbb{N}} |c_n|^2, \qquad \forall (c_n) \in \ell^2 \text{ such that } f = \sum_{n\in\mathbb{N}} c_n g_n,
$$
that is, canonical dual frame coefficients $(\langle f, S^{-1}g_n \rangle)_{n\in \N}$ 
have minimum energy among all $\ell^2$-coefficient sequences synthesizing $f$ from the frame $\{g_n\}_{n\in\mathbb N}$ \cite[Lemma 5.4.2]{christensen2003introduction}.
This means that when representing a signal using a dual frame $\{h_n\}_{n\in \N}$, the coefficients obtained from the canonical dual frame $\{S^{-1}g_n\}_{n\in \N}$ will have the smallest possible total energy in the $L^2$ sense. For a more detailed treatment of frames and bases, we refer the reader to the excellent textbooks on the subject, including \cite{christensen2003introduction,grochenig2013foundations,heil2011basis}.

\subsection{Gabor Frames} In this paper, we focus on Gabor frames, which are obtained by translating a single
window in time and frequency.
Given $ g \in L^2(\mathbb{R}) $ and parameters $\alpha, \beta > 0$, the associated \textit{Gabor system} \cite{gabor1946theory} is defined as the collection of $L^2$-functions
\begin{equation}\label{eq: Gabor system}
    \G(g, \alpha, \beta) := \left\{ e^{2\pi i \beta m (\cdot)} g(\cdot - \alpha n) = M_{\beta m} T_{\alpha n} g \right\}_{n,m \in \mathbb{Z}},    
\end{equation}
where $T_a$ denotes the translation operator by $a \in \mathbb{R}$, i.e., $T_a f(x) := f(x - a)$, and $M_b$ denotes the modulation operator by $b \in \mathbb{R}$, i.e., $M_b f(x) := e^{2\pi i b x} f(x)$. 
This structure has particular importance in many applications such as wireless communication, analysis and description of speech and music signals, among others (see, for example, \cite{grochenig2013foundations,strohmer2003grassmannian}). Complementary work on the mathematical structure of Gabor frames includes the study of frame sets for specific compactly supported windows \cite{lemvig2016counterexamples}, the stability of Gabor frame bounds under changes in the underlying time-frequency lattice \cite{grochenig2023lipschitz,grochenig2024holder}, and the optimization of frame bounds and conditioning over classes of lattices \cite{faulhuber2023gabor}.

For $\G(g, \alpha, \beta)$ to be a frame for $L^2(\mathbb{R})$, it is necessary (but not sufficient) that $\alpha \beta \leq 1$, as well as the condition
\begin{equation}\label{eq: implies bounded window}
    A \leq \frac{1}{\beta} \sum_{k\in \mathbb{Z}} \left| g(x +\alpha k) \right|^2 \leq B \qquad \text{for a.e. } x\in \R,    
\end{equation}
where $A$ and $B$ are the frame bounds (see, for example, \cite{grochenig2013foundations}).

In this work, given a Gabor frame $\G(g, \alpha, \beta)$ for $L^2(\mathbb{R})$, we examine its dual frames that are also Gabor systems of the form $\G(h, \alpha, \beta)$ for some $h\in L^2(\R)$. To that end, let us consider the operator
\begin{equation}
    S_{g,h}(f) := \sum_{m,n \in \mathbb{Z}} \langle f, M_{m\beta} T_{\alpha n} g \rangle M_{m\beta} T_{\alpha n} h = \sum_{m,n \in \mathbb{Z}} \langle f, T_{\alpha n} M_{m\beta} g \rangle T_{\alpha n} M_{m\beta} h,
\end{equation}
where the second equality holds because $T_a M_b f = e^{-2\pi i b a} M_b T_a f$.
In particular, $S = S_{g,g}$ is the \textit{frame operator} of $\G(g, \alpha, \beta)$. It is well-known that if $\G(g, \alpha, \beta)$ is a frame for $L^2(\mathbb{R})$, then its canonical dual frame has also a Gabor structure $\G(\widetilde{g}, \alpha, \beta)$ \cite[Thm 12.3.2]{christensen2003introduction}, where $\widetilde{g} := S_{g,g}^{-1} g$ is called the \textit{canonical dual window}.
In general, $\G(g, \alpha, \beta)$ and $\G(h, \alpha, \beta)$ are dual frames if and only if $S_{g,h} = I = S_{h,g}$ and, in this case, we say that $g$ and $h$ are \textit{dual Gabor windows}, or equivalently, that $h$ is a \textit{dual Gabor window} for $g$.

Given $g,h\in L^2(\R)$ and $\alpha,\beta>0$, for $n\in\Z$ define, whenever the series converges, the correlation function (cross-Gramian matrix)
$$G_n^{g,h}(x):=\sum_{k\in\Z} \overline{g(x+\tfrac{n}{\beta}+\alpha k)}h(x+\alpha k) ,\qquad n\in\Z.$$
For example, this series is well defined under suitable assumptions such as $g,h\in W(\R)$, where $W(\R)$ denotes the \textit{Wiener amalgam space}  \cite[Sec. 6.1]{grochenig2013foundations}.
Then, the operator $S_{g,h}$ can be written through the Walnut representation formula
\begin{equation}\label{eq: Walnut}
    S_{g,h}f(x)=\frac{1}{\beta}\sum_{n\in \Z}G_n^{g,h}(x)f(x+\tfrac{n}{\beta})
\end{equation}
See, for example \cite{walnut1992continuity} and \cite[Sec. 11.5 $\&$ Thm 12.2.1]{christensen2003introduction}. 
Independently, Janssen in \cite[Eq. (1.3.20)]{janssen1998duality} and Ron and Shen in \cite[Corollary 3.8]{ron1997weyl}, \cite[Sec. 4]{ron1995frames} provided the following general characterization of Gabor frames: 
Two Bessel sequences $\mathcal G(g,\alpha,\beta)$, $\mathcal G(h,\alpha,\beta)$ are dual Gabor frames for $L^2(\R)$ if and only if
\begin{equation}\label{eq: characterization dual}
  \sum_{k\in\Z} \overline{g(x+\tfrac{n}{\beta}+ k\alpha)}h(x+ k\alpha)=\beta\delta_{n,0} \qquad \text{ for a.e. } x\in[0,\alpha),
\end{equation}
that is, $G_0^{g,h}(x)=\beta \text{ and } \, G_n^{g,h}(x)=0 \, (n\neq 0)$ for a.e. $x\in [0,\alpha)$.

It is well known that the canonical dual window minimizes the $L^2$-norm among all dual Gabor windows. Specifically, Daubechies, Landau, and Landau showed in \cite{daubechies1994gabor} that if $\G(g,\alpha,\beta)$ is a Gabor frame for $L^2(\R)$, its canonical dual window $\widetilde{g}$ satisfies
$$\|\widetilde{g}\|_2\leq \|h\|_2 \qquad \text{ for every dual Gabor window } h.$$

The problem of selecting dual Gabor windows with additional optimality or localization properties has also been studied from sparse and computational perspectives. In the finite-dimensional setting, Li, Liu, and Mi investigate sparse dual frames and dual Gabor functions having minimal time and/or frequency support and propose $\ell^1$-minimization procedures for their construction \cite{li2013sparse}. Perraudin, Holighaus, S{\o}ndergaard, and Balazs develop a convex-optimization framework for designing dual Gabor windows subject to constraints promoting desirable properties such as smoothness, time-frequency localization, sparsity, and compact support \cite{perraudin2013gabor}. In contrast, the present work considers the problem of minimizing the $L^1$-norm over all exact dual Gabor windows and provides explicit analytic characterizations in the regimes considered below.

\subsection{Contributions} In this work, given a Gabor frame $\G(g,\alpha,\beta)$, we study the problem of characterizing dual Gabor windows with minimal $L^1$-norm among all dual Gabor windows for $g$. \\

In Sections \ref{sec: painless} and \ref{sec: painful}, we assume that $g$ is compactly supported, with $$\supp(g)\subseteq [-L/2,L/2]$$ for some fixed length $L>0$, and  separate our analysis into two regimes: the painless non-orthogonal case, in which the frame operator $ S_{g,g} $ is 
a multiplication operator and therefore explicitly invertible under the frame condition,
which occurs when $ 0 < \alpha \leq L $ and $ 0 < \beta \leq 1/L $ (Section \ref{sec: painless}); and the painful regime with $ \alpha = L $ and $ \beta > 1/L $, with particular emphasis on the normalized choice $\alpha=1$ and $\beta=2/3$ (Section \ref{sec: painful}).
\\

For readability, we make the following
{assumptions in Sections \ref{sec: painless} and \ref{sec: painful}}:
\begin{enumerate}
    \item     \textit{Support of the window:} 
    First, throughout the paper, functions in $L^p(\mathbb R)$ are identified up to equality almost everywhere. We say that $f$ is supported on a measurable set $E$ if $f=0$ almost everywhere on $\mathbb R\setminus E$. Accordingly, all statements concerning supports are understood in the almost-everywhere sense. 

    Second, without loss of generality, it suffices to consider windows $g$ satisfying $\supp(g)\subset[-1,1]$. The general case $\supp(g)\subset[-L/2,L/2]$ can be reduced to this by defining a rescaled window $x\mapsto \sqrt{L/2}\, g(Lx/2)$, which induces rescaled Gabor parameters $\alpha' = \frac{2\alpha}{L}$ and $\beta' = \frac{\beta L}{2}$. 
   
    \item \textit{Bounded window:} Let $g\in L^2(\R)$ compactly supported, and consider the Gabor system  $\G(g,\alpha,\beta)$ as in \eqref{eq: Gabor system}. On the one hand, as a consequence of \eqref{eq: implies bounded window}, if $ \G(g, \alpha, \beta) $ forms a Gabor frame, then  
    $$
    |g(x)|^2 \leq \sum_{k\in \mathbb{Z}} |g(x + \alpha k)|^2 \leq \beta B \qquad \text{for a.e. } x\in \R.
    $$  
    implying that $ g \in L^\infty(\mathbb{R}) $ (see also \cite[Theorem 3.12]{benedetto1993walnut}). On the other hand, it is known that if $ g $ is bounded and compactly supported, then $ \G(g, \alpha, \beta) $ is automatically a Bessel sequence for any $ \alpha, \beta > 0 $ (see, for example,  \cite[Corollary 11.4.3]{christensen2008frames}). Due to these observations, we will assume throughout this work that $ g $ is bounded, i.e., $g\in L^\infty(\R)$. 
\end{enumerate}

Finally, in Section \ref{sec: diag dominant}  we weaken the assumption of compact support on the window $g$ by considering general Gabor frames with diagonally dominant frame operator and study \textit{approximate dual windows} with small $L^1$-norm.
\\

While in general one may allow $g:\R\to \C$, the complex-valued setting plays no special role in our arguments. For this reason, throughout the paper we adopt the convention that $g$ is real-valued.\\

Our main results can be summarized as follows:
\begin{itemize}[leftmargin=*] 
    \item In the painless regime, we prove that any dual Gabor window minimizing the $L^1$-norm must have support of measure at least $\alpha$, and that one can always choose such a minimizer supported inside the support of the original window. 
    We then give an explicit construction of an $L^1$-minimizing dual window by selecting, for almost every $x$ in a fundamental domain, the translate on which $|g|$ is maximal and defining the dual by $h=\beta/g$ on the corresponding set (see Theorem \ref{thm: main thm painless}). Furthermore, we characterize when this minimizer is unique: uniqueness holds whenever the values $|g(x+\alpha k)|$ are pairwise distinct for almost every $x$.
    In the non-unique case, we completely describe the full family of $L^1$-minimizing dual windows (see Theorem \ref{thm: allL1minimizers}) and  identify exactly those minimizers that also have minimal possible support measure, commonly referred to, by a slight abuse of terminology, as the $L^0$-norm (see Corollary \ref{coro: main corollary painless support}).

    \item In the painful regime, we focus on the case $\alpha=1$ and $\beta=2/3$. Using a compact-support reduction for dual windows together with a pointwise finite-dimensional $\ell^1$ minimization problem, we derive an explicit description of the dual window with minimal $L^1$-norm in this case (see Theorem \ref{thm: main painful}).
    
    \item Finally, for general Gabor frames with diagonally dominant frame operator, we consider approximate duality. There, we construct compactly supported approximate dual windows with nearly minimal $L^1$-norm and show in particular that for windows in a suitable decay class, one obtains approximate duals whose $L^1$-norm approaches the natural lower bound when the lattice parameters are sufficiently small (see Theorem \ref{thm: diag dom}).
\end{itemize}

In order to make the exposition as clear and accessible as possible, each section is accompanied by examples.
Moreover, we include a discussion of an apparent contradiction in the literature concerning the existence of compactly supported canonical dual windows beyond the painless regime (see Section \ref{sec: discussion}).

\section{Painless Non-orthogonal Expansions}\label{sec: painless}

For the entirety of this section, let $g \in L^\infty(\R)$ be a function of compact support, say on the interval $[-1,1]$. Then, for $0 < \alpha \leq 2$ and $0 < \beta \leq 1/2$, the frame operator associated to the Gabor system $\G(g,\alpha,\beta)$ is given by multiplication by 
\begin{equation*}
\tfrac{1}{\beta}G_0^{g,g}(x) = \tfrac{1}{\beta}\sum_{k \in \Z} |g(x+\alpha k)|^2. 
\end{equation*}
This was originally shown in \cite{daubechies1986painless}, but it can also be deduced from Walnut's representation of the frame operator \cite{walnut1992continuity}. It follows immediately that $\G(g,\alpha,\beta)$ is a frame for $L^2(\R)$ if and only if $G_0^{g,g}(x)$ is bounded and bounded away from zero. That is, there exist $B\geq A > 0$ such that 
\begin{equation}\label{eq: bounds of G0gg}
    A \leq \tfrac{1}{\beta} G_0^{g,g}(x) \leq B
\end{equation}
holds for almost all $x \in \R$. See also \cite[Theorem 6.4.1]{grochenig2013foundations}.

We will prove the following theorem. 
\begin{theorem}\label{thm: main thm painless}
    Let $\mathcal{G}(g,\alpha,\beta)$ be a Gabor frame with $g \in L^\infty(\R)$, $\supp{(g)} = [-1,1]$, and $\alpha \leq 2$, $\beta \leq 1/2$. Then there is an explicit set $I \subset [-1,1]$ (see Construction \ref{cons: set_I}) such that
    \begin{equation*}
   h(x) =  \begin{cases}
       \frac{\beta}{ {g(x)}}
 & x \in I \\
 0 & x \notin I
 \end{cases}
    \end{equation*}
    is a dual window to $\mathcal{G}(g,\alpha,\beta)$ that has minimal $L^1$-norm. Furthermore, if for almost every $x \in [0,\alpha)$, the values $|g(x+\alpha k)|$ are all distinct for 
    \begin{equation*}
    k \in \Z \cap \left[\tfrac{x-1}{\alpha}, \tfrac{x+1}{\alpha} \right],
    \end{equation*}
 then $h(x)$ is the unique dual window of $\mathcal{G}(g,\alpha,\beta)$ with minimal $L^1$-norm. 
\end{theorem}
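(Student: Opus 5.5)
The plan is to reduce the $L^1$ minimization to a family of pointwise finite-dimensional problems using the duality characterization \eqref{eq: characterization dual}, and then solve each of these explicitly.

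\emph{Step 1: reduction to a fibered constraint.} Since $\supp(g)\subseteq[-1,1]$ and $\beta\le 1/2$, we have $1/\beta\ge 2$. For $n\neq 0$ and any $x$, the products $g(x+n/\beta+\alpha k)\,g(x+\alpha k)$ vanish a.e. because the two arguments differ by at least $2$. The only exception is a null set, where they differ by exactly $2$ and both sit at endpoints.

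This does not yet constrain $h$ away from $\supp(g)$. First I show that any dual $h$ can be replaced by $h\mathbbm 1_{[-1,1]}$ without increasing $\|h\|_1$, and that the result is still dual. The conditions $G_n^{g,h}=\beta\delta_{n,0}$ only involve $h$ at points where some translate of $g$ is nonzero. With $n/\beta$ of modulus at least $2$, the conditions for $n\ne0$ hold automatically once $h$ is supported in $[-1,1]$. The truncated function also remains in $L^2$ and Bessel, since it is bounded as shown below.

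So the problem becomes: minimize $\|h\|_1$ over $h$ supported in $[-1,1]$ subject to
\[
\sum_{k} g(x+\alpha k)h(x+\alpha k)=\beta \quad \text{a.e. } x\in[0,\alpha).
\]
Writing $\|h\|_1=\int_0^\alpha\sum_k|h(x+\alpha k)|\,dx$ decouples the problem fiberwise over $x$. The sum runs over the finite index set $K_x=\Z\cap[\frac{x-1}{\alpha},\frac{x+1}{\alpha}]$.

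\emph{Step 2: the pointwise $\ell^1$ problem.} For fixed $x$, set $a_k=g(x+\alpha k)$ and $c_k=h(x+\alpha k)$. The task is to minimize $\sum|c_k|$ subject to $\sum a_kc_k=\beta$. By Hölder,
\[
\beta\le \max_k|a_k|\sum_k|c_k|,
\]
so $\sum|c_k|\ge \beta/\max_k|a_k|$. Equality holds for $c_{k^*}=\beta/a_{k^*}$ and $c_k=0$ otherwise, where $k^*$ attains the maximum. Here \eqref{eq: bounds of G0gg} guarantees $\max_k|a_k|>0$ a.e., and in fact $\max_k|a_k|^2\ge \beta A/|K_x|$. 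Since $|K_x|$ is uniformly bounded, $h$ is bounded, hence in $L^1\cap L^2$ and Bessel.

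Construction \ref{cons: set_I} should be exactly this choice. Take $I=\{x+\alpha k^*(x): x\in[0,\alpha)\}$, with $k^*(x)$ chosen measurably, for instance the smallest index attaining the maximum. Measurability of $I$ follows because it is a finite union of sets $\{x:k^*(x)=j\}+\alpha j$, each defined by finitely many inequalities between measurable functions. Integrating the pointwise lower bound over $[0,\alpha)$ then shows that $h$ is an $L^1$ minimizer among all duals.

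\emph{Step 3: uniqueness.} Let $h'$ be another minimizer. By Step 1, $\|h'\mathbbm 1_{[-1,1]}\|_1\le\|h'\|_1$, so minimality forces $h'=0$ a.e. off $[-1,1]$. Minimality also forces equality in the fiberwise Hölder bound for a.e. $x$, since the integral of nonnegative slack must vanish. Equality in $\beta=|\sum a_kc_k|\le\max|a_k|\sum|c_k|$ requires $c_k=0$ whenever $|a_k|<\max|a_j|$, with the correct phase on the rest. If the $|a_k|$ are distinct, only $k^*$ survives, which forces $c_{k^*}=\beta/a_{k^*}$. Hence $h'=h$ a.e.

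The main obstacle I expect is Step 1. One must check carefully that duality imposes no constraint outside $[-1,1]$ and that the conditions for $n\ne0$ are genuinely vacuous, including the boundary case $\beta=1/2$ where the shift is exactly $2$. One must also handle the a.e./measurability issues in selecting $k^*(x)$. The pointwise $\ell^1$ step itself is routine.
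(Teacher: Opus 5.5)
Your proposal is correct and follows the paper's argument essentially step for step: the truncation to $[-1,1]$ of Proposition~\ref{P:support}, the fiberwise H\"older bound of Lemmas~\ref{L:well-defined} and~\ref{L:bound}, the smallest-maximizing-index selection of Construction~\ref{cons: set_I} with the same Bessel estimate $\max_k|g(x+\alpha k)|^2\geq G_0^{g,g}(x)/(2N_\alpha+1)$, and uniqueness from the equality case of the triangle inequality as in Theorem~\ref{thm: allL1minimizers}. Two small corrections: for an \emph{arbitrary} dual $h$, boundedness of $h\mathbf{1}_{[-1,1]}$ comes from the Bessel property of $\mathcal{G}(h,\alpha,\beta)$, as in the paper, not from the estimate you later prove for the constructed window (and since the fiberwise lower bound applies to any dual directly, you do not actually need it); and the fiber index set should be $\Z\cap[\frac{-1-x}{\alpha},\frac{1-x}{\alpha}]$, because the interval written in the statement has its sign flipped.
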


\begin{remark}\label{rmk: canonical dual not L1 min}
Although the canonical dual window minimizes the $L^2$-norm among all dual Gabor windows, it does not, in general, minimize the $L^1$-norm. Example \ref{example: cosine} below gives a case in which the $L^1$-norm of the canonical dual is strictly larger than the minimum. By contrast, Example \ref{example: non !} shows that the canonical dual may be $L^1$-minimizing without being the unique $L^1$-minimizer or having minimal possible support ($L^0$-norm).
\end{remark}

We begin by establishing some preliminary results.  We will use the notation 
$\mathbf{1}_A$ to denote the indicator function of the measurable set $A$.

\begin{proposition}\label{P:support}
    Let $g\in L^\infty(\R)$ be supported on $[-1,1]$. Assume $\mathcal{G}(g,\alpha,\beta)$ is a Gabor frame and $\mathcal{G}(h,\alpha,\beta)$ is an arbitrary dual Gabor frame. Then the support of $h$ has Lebesgue measure at least $\alpha$. If, in addition, $\alpha \leq 2$, $\beta \leq 1/2$ and $h$ is such that $\|h\|_1$ is minimized among all dual Gabor windows, then $\supp(h) \subset [-1,1]$. 
\end{proposition}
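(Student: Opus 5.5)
The plan is to treat the two claims separately. Both rest on the Janssen/Ron--Shen characterization \eqref{eq: characterization dual}; since $g$ is real-valued, the conjugation plays no role.

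\emph{Support of measure at least $\alpha$.} Let $h$ be any dual window and $E=\{x : h(x)\neq 0\}$. The $n=0$ case of \eqref{eq: characterization dual} gives
\[
\sum_{k\in\Z} g(x+\alpha k)h(x+\alpha k)=\beta\neq 0 \quad \text{for a.e. } x\in[0,\alpha).
\]
So for a.e. $x\in[0,\alpha)$ there is some $k$ with $x+\alpha k\in E$. Equivalently, $[0,\alpha)\subseteq \bigcup_{k}\big((E\cap[\alpha k,\alpha(k+1)))-\alpha k\big)$ up to a null set. Subadditivity and translation invariance of Lebesgue measure then give
\[
\alpha\le \sum_k |E\cap[\alpha k,\alpha(k+1))| = |E|.
\]

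\emph{Minimizers live in $[-1,1]$.} Given an arbitrary dual window $h$, set $h'=h\,\mathbf{1}_{[-1,1]}$. I will show that $h'$ is again a dual window. Once that holds, $\|h'\|_1\le\|h\|_1$, with equality iff $h=0$ a.e. off $[-1,1]$. An $L^1$-minimizer must then satisfy $h=h'$, i.e.\ $\supp(h)\subset[-1,1]$.

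To verify the duality equations for $h'$, I check the two cases of $n$:
\begin{itemize}
\item For $n=0$: $g$ vanishes off $[-1,1]$, so each product $g(x+\alpha k)h'(x+\alpha k)$ equals $g(x+\alpha k)h(x+\alpha k)$. Hence $G_0^{g,h'}=G_0^{g,h}=\beta$.
\item For $n\neq0$: a nonzero term $g(x+\tfrac n\beta+\alpha k)h'(x+\alpha k)$ requires both $x+\alpha k$ and $x+\alpha k+\tfrac n\beta$ to lie in $[-1,1]$. Since $\beta\le 1/2$, these two points are at distance $|n|/\beta\ge 2$, so this happens only at the endpoints $\pm1$. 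For fixed $n$ this is a null set of $x$, and $n$ ranges over a countable set, so $G_n^{g,h'}=0$ a.e.
\end{itemize}
Note that only $\beta\le1/2$ is used here; $\alpha\le 2$ is not needed for this step.

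The main obstacle is that the characterization \eqref{eq: characterization dual} applies only to Bessel sequences, so I must check that $\G(h',\alpha,\beta)$ is Bessel. Since $\G(h,\alpha,\beta)$ is a frame, it is Bessel. The upper bound in \eqref{eq: implies bounded window} is a necessary condition for the Bessel property alone, so it gives $\sum_k|h(x+\alpha k)|^2\le \beta B'$ a.e. for some constant $B'$. In particular $h$, and hence $h'$, is essentially bounded. Then $h'$ is bounded and compactly supported, so $\G(h',\alpha,\beta)$ is Bessel by the result cited in the introduction (\cite[Corollary 11.4.3]{christensen2008frames}). With this, \eqref{eq: characterization dual} applies to the pair $g,h'$ and shows $h'$ is a dual window.
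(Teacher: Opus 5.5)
Your proposal is correct and follows essentially the same route as the paper. For the measure bound you use the same covering and subadditivity argument over the fundamental domain $[0,\alpha)$, arguing directly where the paper argues by contradiction. For the second claim you use the same truncation $h\mathbf{1}_{[-1,1]}$, with the same Bessel check: boundedness of $h$ from its Bessel property, followed by compact support.
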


\begin{proof}[Proof of Proposition \ref{P:support}] 
Let $E:=\supp(h)$ and assume, toward a contradiction, that
$
|E|=\alpha-\varepsilon<\alpha
$,
for some $\varepsilon>0$.
Consider the $\alpha$-periodization of $E$ on a fundamental domain,
\begin{equation*}
F:=\bigcup_{k\in\mathbb Z}(E-\alpha k)\cap [0,\alpha).
\end{equation*}
Thus $F$ consists of those points $x\in[0,\alpha)$ for which there exists
$k\in\mathbb Z$ such that $x+\alpha k\in E$. 
To show that $|F|\le |E|$, define
\begin{equation*}
E_k:=E\cap [k\alpha,(k+1)\alpha), \qquad k\in\mathbb Z.
\end{equation*}
Then the sets $E_k$ are pairwise disjoint and
$
E=\bigcup_{k\in\mathbb Z} E_k$.
Moreover, $
F=\bigcup_{k\in\mathbb Z}(E_k-\alpha k)$.
Since each set $E_k-\alpha k\subset[0,\alpha)$ and translation preserves Lebesgue measure, i.e., 
$|E_k-\alpha k|=|E_k|$ for all $k\in\mathbb Z$, we obtain, 
by subadditivity of Lebesgue measure, that $|F|\leq |E|<\alpha$. 
Therefore, the set
\begin{equation*}
U:=[0,\alpha)\setminus F
\end{equation*}
has positive measure and satisfies that for every $x\in U$ and every $k\in\mathbb Z$ we have
$x+\alpha k\notin E=\supp(h)$, hence $h(x+\alpha k)=0$ $\forall k\in\mathbb Z$.
Consequently,
\begin{equation*}
\sum_{k\in\mathbb Z} {g(x+\alpha k)}\,h(x+\alpha k)=0\neq \beta
\qquad \forall x\in U,
\end{equation*}
which contradicts \eqref{eq: characterization dual} for $n=0$.
Hence $|\supp(h)|=|\{x\in\mathbb R: \,  h(x)\not=0\}|\ge \alpha$.

For the second claim, when $n \neq 0$, equation \eqref{eq: characterization dual} is trivially satisfied by all functions that have support in $[-1,1]$. Furthermore, when $n = 0$, equation \eqref{eq: characterization dual} depends only on the values of $h$ in the interval $[-1,1]$.
Hence, if $h$ is a dual window to $\mathcal{G}(g,\alpha,\beta)$, then  $h\mathbf{1}_{[-1,1]}$ satisfies the same duality equations \eqref{eq: characterization dual} as $h$. Also, since $\mathcal{G}(h, \alpha, \beta)$ is a Bessel sequence, $h \in L^{\infty}(\mathbb{R})$. So, $h \mathbf{1}_{[-1,1]}$ is bounded and compactly supported, and therefore $\mathcal{G}\left(h \mathbf{1}_{[-1,1]}, \alpha, \beta\right)$ is a Bessel sequence.  Consequently, $\mathcal{G}(g, \alpha, \beta)$ and $\mathcal{G}\left(h \mathbf{1}_{[-1,1]}, \alpha, \beta\right)$ are dual Gabor frames. (We will use this same argument later in Section \ref{sec: painful}.) Finally, since $\|h\mathbf{1}_{[-1,1]}\|_1 \leq \|h\|_1$ with equality if and only if $h=0$ almost everywhere outside $[-1, 1]$, every $L^1$-minimizing dual window is supported on $[-1,1]$. This completes the proof. (We notice that the same argument holds for $1\leq p<\infty$, that is, it is not particular to $L^1$-minimizer duals.)
\end{proof}

Proposition \ref{P:support} demonstrates that we can narrow the search for dual windows with minimal $L^1$-norm to only those dual windows which are supported on $[-1,1]$. As it turns out, this restriction makes the search quite simple. However, we will need some preparatory results. For the remainder of this section, let 
\begin{equation}\label{eq: N alpha}
    N_\alpha :=\left\lceil{\tfrac{{2}}{\alpha}}\right\rceil
\end{equation}
so that for almost all $x\in[0,\alpha]$, only finitely many integers $k$
satisfy $x+\alpha k\in[-1,1]$, and all such $k$ lie in $\{-N_\alpha,\dots,{N_\alpha}\}$.

\begin{lemma}\label{L:well-defined}
    If $\mathcal{G}(g,\alpha,\beta)$ is as in Theorem \ref{thm: main thm painless}, then for almost all $x \in [0,\alpha]$ 
    \begin{equation*}
    \max_{-N_\alpha \leq k \leq {N_\alpha}}{|g(x+\alpha k)|} > 0,
    \end{equation*}
    where $N_\alpha$ is defined by \eqref{eq: N alpha}.
\end{lemma}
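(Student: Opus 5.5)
The plan is to combine the lower frame bound \eqref{eq: bounds of G0gg} with the support assumption on $g$. In the painless regime $0<\alpha\le 2$, $0<\beta\le 1/2$ (with $\supp(g)\subseteq[-1,1]$ and $g\in L^\infty(\R)$), the frame operator is multiplication by $\tfrac1\beta G_0^{g,g}$. Since $\G(g,\alpha,\beta)$ is a frame, \eqref{eq: bounds of G0gg} gives a set $Z\subset\R$ of measure zero such that
\begin{equation*}
\sum_{k\in\Z}|g(x+\alpha k)|^2 \;\ge\; \beta A \;>\;0 \qquad \text{for all } x\in\R\setminus Z.
\end{equation*}

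Next I will reduce the infinite sum to the finite index range. Fix $x\in[0,\alpha]$. If $|k|>N_\alpha=\lceil 2/\alpha\rceil$, then $|x+\alpha k|\ge \alpha|k|-x>\alpha N_\alpha-\alpha\ge 2-\alpha$. A more careful bound is needed here to conclude $x+\alpha k\notin[-1,1]$. Take $k\ge N_\alpha+1$: then $x+\alpha k\ge \alpha(N_\alpha+1)>2\ge1$. Take $k\le -N_\alpha-1$: then $x+\alpha k\le \alpha-\alpha N_\alpha-\alpha=-\alpha N_\alpha\le -2<-1$. In both cases $x+\alpha k$ lies outside $[-1,1]$, so $g(x+\alpha k)=0$, up to a null set of $x$. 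This is the remark after \eqref{eq: N alpha}.

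The one measure-theoretic point is that the exceptional sets are null. The set of $x\in[0,\alpha]$ with $x+\alpha k\in\{g\ne0\}\setminus[-1,1]$ is a translate of a null set for each $k$, and there are countably many $k$. Hence, after discarding a null set $Z'$, for a.e. $x\in[0,\alpha]$ the sum collapses to
\begin{equation*}
\sum_{k=-N_\alpha}^{N_\alpha}|g(x+\alpha k)|^2\ge\beta A>0 .
\end{equation*}

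Finally, a finite sum of $2N_\alpha+1$ nonnegative terms is bounded by $(2N_\alpha+1)$ times its largest term. This gives
\begin{equation*}
\max_{|k|\le N_\alpha}|g(x+\alpha k)|^2\ge \frac{\beta A}{2N_\alpha+1}>0
\end{equation*}
for all $x\in[0,\alpha]\setminus(Z\cup Z')$. This yields the claim, together with the uniform lower bound $\max_k|g(x+\alpha k)|\ge\sqrt{\beta A/(2N_\alpha+1)}$, which should be useful later for bounding $h=\beta/g$ on the set $I$. I expect no step to be a real obstacle. The only care needed is the almost-everywhere bookkeeping, both for the null sets and for the index range at the endpoints $x=0,\alpha$.
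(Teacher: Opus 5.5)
Your proof is correct and follows the same route as the paper. Both arguments rest on the a.e.\ lower bound $G_0^{g,g}\ge \beta A>0$ from \eqref{eq: bounds of G0gg}, together with the fact that only indices $|k|\le N_\alpha$ contribute for $x\in[0,\alpha]$; the paper argues by contradiction ($G_0^{g,g}=0$ on a positive-measure set), while you argue directly. Your max-versus-average estimate $\max_{|k|\le N_\alpha}|g(x+\alpha k)|^2\ge \beta A/(2N_\alpha+1)$ is exactly the bound the paper later uses in \eqref{eq: for the bessel bound}, and your explicit check of the index range and the null sets fills in details the paper leaves implicit.
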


\begin{proof}[Proof of Lemma \ref{L:well-defined}]
If $0 = g(x+\alpha k)$ for each $k \in \{-N_\alpha,...,{N_\alpha}\}$ holds in some set $U$ of positive measure, then 
\begin{equation*}
G_0^{g,g}(x) = \sum_{k = -N_\alpha}^{{N_\alpha}} |g(x+\alpha k)|^2 = 0
\end{equation*}
for all $x \in U$. This contradicts the assumption that $\mathcal{G}(g,\alpha,\beta)$ is a frame. 
\end{proof}

\begin{lemma}\label{L:bound}
    Let $\G(g,\alpha,\beta)$ be as in Theorem \ref{thm: main thm painless}. If $h$ is a dual window supported on $[-1,1]$, then 
    \begin{equation}\label{E:lower_bound}
    \sum_{k = -N_\alpha}^{{N_\alpha}} |h(x+\alpha k)| \geq \frac{\beta}{\max\limits_{-N_\alpha\leq k\leq {N_\alpha}}|g(x+\alpha k)|},
    \end{equation}
    where $N_\alpha$ is defined by \eqref{eq: N alpha}.
    If equality in \eqref{E:lower_bound} is achieved for almost all $x \in [0,\alpha]$, then $h$ has minimal $L^1$ norm among all dual windows for $\mathcal{G}(g,\alpha,\beta)$.
\end{lemma}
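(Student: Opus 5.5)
The plan is to use the duality condition \eqref{eq: characterization dual} with $n=0$ together with a triangle-type inequality. Since $h$ and $g$ are supported on $[-1,1]$, for almost every $x\in[0,\alpha]$ every $k$ with $x+\alpha k\in[-1,1]$ lies in $\{-N_\alpha,\dots,N_\alpha\}$. The $n=0$ equation therefore reduces to the finite sum
\begin{equation*}
\beta=\sum_{k=-N_\alpha}^{N_\alpha} g(x+\alpha k)\,h(x+\alpha k).
\end{equation*}
Taking absolute values and bounding each $|g(x+\alpha k)|$ by $M(x):=\max_{-N_\alpha\le k\le N_\alpha}|g(x+\alpha k)|$ gives
\begin{equation*}
\beta\le M(x)\sum_{k=-N_\alpha}^{N_\alpha}|h(x+\alpha k)|.
\end{equation*}
By Lemma \ref{L:well-defined}, $M(x)>0$ almost everywhere, so we may divide by $M(x)$, which yields \eqref{E:lower_bound}.

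For the minimality claim, I would first write the $L^1$-norm of any $h$ supported on $[-1,1]$ as an integral over a fundamental domain. Because translation preserves measure,
\begin{equation*}
\|h\|_1=\int_0^\alpha\sum_{k\in\Z}|h(x+\alpha k)|\,dx=\int_0^\alpha\sum_{k=-N_\alpha}^{N_\alpha}|h(x+\alpha k)|\,dx .
\end{equation*}
Now let $h'$ be an arbitrary dual window. By Proposition \ref{P:support}, $h'\mathbf 1_{[-1,1]}$ is again a dual window and satisfies $\|h'\mathbf 1_{[-1,1]}\|_1\le\|h'\|_1$. So it suffices to compare $h$ with dual windows supported on $[-1,1]$. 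Applying the first part to $h'\mathbf 1_{[-1,1]}$ and integrating over $[0,\alpha]$ gives
\begin{equation*}
\|h'\|_1\ge\int_0^\alpha \frac{\beta}{M(x)}\,dx .
\end{equation*}
If equality in \eqref{E:lower_bound} holds almost everywhere for $h$, then $\|h\|_1$ equals exactly this integral, so $h$ attains the minimum.

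No step is really hard. The only points needing care are measure-theoretic bookkeeping. First, $M$ should be measurable, which holds because it is a finite maximum of measurable functions. Second, the almost-everywhere statements at the boundary point $x=\alpha$ and at the endpoints $\pm1$ have measure zero, so they do not affect the integrals. Third, the integral $\int_0^\alpha\beta/M$ may be infinite; in that case the inequality still holds in the extended sense, and the conclusion is unaffected.
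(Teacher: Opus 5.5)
Your proposal is correct and follows essentially the paper's argument. The pointwise bound comes from the $n=0$ duality equation together with $|g(x+\alpha k)|\le M(x)$, and minimality comes from writing $\|h\|_1$ as the integral over $[0,\alpha]$ of the $\alpha$-periodization. You are also slightly more explicit than the paper in reducing an arbitrary dual $h'$ to $h'\mathbf{1}_{[-1,1]}$ via the truncation argument in the proof of Proposition~\ref{P:support}, a step the paper leaves implicit.
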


\begin{proof}[Proof of Lemma \ref{L:bound}]
    Since $h$ is assumed to be a dual window, by \eqref{eq: characterization dual} we have 
    \begin{equation}\label{eq: > beta}
        \beta = \bigg|\sum_{k=-N_\alpha}^{{N_\alpha}}  {g(x+\alpha k)}h(x+\alpha k)\bigg| \leq \max\limits_{-N_\alpha\leq k\leq {N_\alpha}}|g(x+\alpha k)|\cdot\sum_{k=-N_\alpha}^{{N_\alpha}} |h(x+\alpha k)|    
    \end{equation}
    for almost all $x \in [0,\alpha]$. This gives the lower bound 
    \begin{equation*}
    \sum_{k = -N_\alpha}^{{N_\alpha}} |h(x+\alpha k)| \geq \frac{\beta}{\max\limits_{-N_\alpha\leq k\leq {N_\alpha}}|g(x+\alpha k)|}
    \end{equation*}
    which is well-defined for almost all $x$ by Lemma \ref{L:well-defined}. To show the second claim, we only have to note that 
     \begin{align*}
        \|h\|_1 = \int_{{-1}}^1 |h(x)|\;dx = \int_0^\alpha \sum_{k = -N_\alpha}^{{N_\alpha}} |h(x+\alpha k)|\;dx.
    \end{align*}
    Hence if the $\alpha$-periodization of $h$ is minimized pointwise, the $L^1$-norm of $h$ is minimized. 
\end{proof}

\begin{remark}
    We highlight an important takeaway from the proof of Lemma \ref{L:bound}: Given a dual window $h$ that minimizes the $L^p$-norm over all possible dual windows of $g$, we have 
    $$\|h\|_p^p=\int_0^\alpha\sum_{k=-N_\alpha}^{N_\alpha}|h(x+\alpha k)|^p \, dx,$$
    and the problem of finding an $L^p$-minimizing dual, for any $1\leq p<\infty$, reduces to minimizing an $\ell^p$ objective over the vector of lattice translates  $(h(x-\alpha N_\alpha),\dots, h(x+\alpha N_\alpha))$ for a.e. $x\in[0,\alpha]$. 
    The constraints to be satisfied are given by \eqref{eq: characterization dual}. Since $\supp{(h)} \subseteq [-1,1]$ (Proposition \ref{P:support}), the only nontrivial equation in \eqref{eq: characterization dual} is the $n=0$ condition, which reads as follows for a.e. $x\in[0,\alpha]$: 
    $$
    \beta=\sum_{k\in\Z}  {g(x+\alpha k)}\,h(x+\alpha k)=\sum_{k=-N_\alpha}^{N_\alpha}  {g(x+\alpha k)}\,h(x+\alpha k).
    $$
    Thus, for each fixed $x\in[0,\alpha]$ we are solving a finite-dimensional  $\ell^p$ minimization problem with one linear constraint.
\end{remark}

We now prepare to prove Theorem \ref{thm: main thm painless}.  

\begin{construction}\label{cons: set_I}
We construct the set $I \subset [-1,1]$ in the statement of Theorem \ref{thm: main thm painless} (well-defined up to a set of measure 0) that will serve as the support of our dual window. By Lemma \ref{L:well-defined}, there is a measurable set $U \subseteq [0,\alpha)$ of Lebesgue measure $\alpha$ such that 
\begin{equation}\label{eq: max}
    M_\alpha(x):=\max_{-N_\alpha \leq k \leq {N_\alpha}}{|g(x+\alpha k)|} > 0
\end{equation}
for all $x \in U$. Given $x \in U$, let $J_x \subset \{-N_\alpha,...,{N_\alpha}\}$ be the set of indices such that the above maximum \eqref{eq: max} is achieved, i.e.
\begin{equation}\label{def: J_x}
    J_x = \argmax_{-N_\alpha\leq k \leq {N_\alpha}} |g(x+\alpha k)|.    
\end{equation}
Then, we define the set $I$ as follows:
\begin{center}
\textit{If $k_x$ is the smallest index in $J_x$, we say $x+\alpha k_x\in I$ and $x+\alpha\ell \notin I$ for all integers $\ell \neq k_x$. }
\end{center}
Thus, $I \subset [-1,1]$ is well-defined up to a subset of measure 0, and has the property that for almost all $x \in \R$, there is exactly one integer $k \in \Z$ such that $x+\alpha k \in I$. 

Moreover, it follows that $I$ has Lebesgue measure $\alpha$:
the assignment $\phi:U\to I$, $\phi(x):=x+\alpha k_x$ is well-defined, and one can partition $$U=\bigcup_{k=-N_\alpha}^{{N_\alpha}}\underbrace{\{x\in U\mid \, k_x=k\}}_{U_k}.$$
Since $g$ is measurable, the finite maximum $M_\alpha(x)$, defined by \eqref{eq: max},
is measurable. Moreover, the tie-breaking rule defining $k_x$ as the
smallest index at which this maximum is attained gives $U_k
=
U\cap \left\{x:\ |g(x+\alpha k)|=M_\alpha(x)\right\}$.
Hence, each $U_k$ is measurable. 
On each $U_k$ the map $\phi$ acts by translation by $\alpha k$, i.e.,  $\phi(U_k)=U_k+\alpha k\subset [\alpha k, \alpha(k+1))$. So, each $\phi(U_k)$ is measurable and we have a new disjoint union $I=\phi(U)=\bigcup_{k=-N_\alpha}^{{N_\alpha}}\phi(U_k)$, with $|\phi(U_k)|=|U_k|$. The  result is that
$$|I|=\sum_{k=-N_\alpha}^{{N_\alpha}}|\phi(U_k)|=\sum_{k=-N_\alpha}^{{N_\alpha}}|U_k|=|U|=\alpha.$$
\end{construction}

\begin{proof}[Proof of Theorem \ref{thm: main thm painless}]
    First, we note that $h(x)$ is well-defined up to a set of measure 0. Indeed, the set $I$ is defined up to a set of measure 0, and $|g(x)| > 0$ for all $x \in I$ by construction, so $h$ is well-defined. 
    
    Now we show that $h$ is a dual window. For almost all $x \in [-1,1]$, there exists some $\ell \in \Z$ such that $x + \alpha \ell \in I$ and $x + \alpha k \notin I$ for $k \neq \ell$. It follows that 
    \begin{equation*}
    \sum_{k\in \Z}  {g(x+\alpha k)}h(x+\alpha k) =  {g(x+\alpha \ell)}\frac{\beta}{ {g(x+\alpha \ell)}} = \beta.
    \end{equation*}
    Furthermore, since $\supp{(h)} = I \subset [-1,1]$, we have 
    \begin{equation*}
    \sum_{k\in\Z} {g(x+\tfrac{n}{\beta}+ k\alpha)}h(x+ k\alpha)=0
    \end{equation*}
    for all $n \neq 0$ and almost all $x \in [0,\alpha)$. We have thus shown that $h$ satisfies $\eqref{eq: characterization dual}$.

 To conclude that $h(x)$ is a dual window, we need that $\mathcal{G}(h,\alpha,\beta)$ is a Bessel sequence. Observe that for all $x \in I$, 
    \begin{equation*}
    |h(x)|^2 = \frac{\beta^2}{|g(x)|^2}
    \end{equation*}
    and $|g(x)|^2 \geq |g(x+\alpha k)|^2$ for all $0\neq k\in \Z$ by construction. Indeed, if $x\in I$, then $x=u+\alpha k_u$ for some
$u\in U$, where $k_u$ is chosen so that
$
|g(x)|=|g(u+\alpha k_u)|=\max_j |g(u+\alpha j)|=\max_k |g(x+\alpha k)|$.  
    Hence, using that the number of nonzero terms in $\sum_{k\in\Z} |g(x+\alpha k)|^2$ is at most $2N_\alpha+1$, and using \eqref{eq: bounds of G0gg}, we have
    \begin{equation}\label{eq: for the bessel bound}
            |h(x)|^2 = \frac{\beta^2}{|g(x)|^2} \leq \frac{\beta^2}{\frac{1}{2N_\alpha+1}\sum_{k\in\Z} |g(x+\alpha k)|^2} = \frac{(2N_\alpha+1)\beta^2}{G_0^{g,g}(x)} \leq \frac{(2N_\alpha+1)\beta^2}{\beta A}.
    \end{equation}
    So, $h \in L^\infty(\R)$ and is compactly supported. As mentioned in Section \ref{sec: Intro}, these properties on $h$ guarantee that $\mathcal{G}(h,\alpha,\beta)$ is a Bessel sequence (see, for example, \cite[Corollary 11.4.3]{christensen2008frames}). 

    Finally, we argue that $h(x)$ has minimal $L^1$ norm among all dual windows of $\mathcal{G}(g,\alpha,\beta)$. By Lemma \ref{L:bound}, we only need to argue that the $\alpha$-periodization of $h$ achieves equality in \eqref{E:lower_bound} for almost all $x$. 
    Indeed, by the defining property of $I$ in Construction \ref{cons: set_I},
for a.e. $x\in[0,\alpha)$, there exists exactly one $\ell\in\mathbb Z$
such that $x+\alpha\ell\in I$. Since $h=\beta/g$ on $I$ and $h=0$
on $\mathbb R\setminus I$, this is equivalent to saying that there exists
exactly one $\ell\in\mathbb Z$ such that $h(x+\alpha\ell)\neq 0$.
 Therefore, 
    \begin{equation*}
    \sum_{k  = -N_\alpha}^{{N_\alpha}} |h(x+\alpha k)| = |h(x+\alpha \ell)| = \frac{\beta}{|g(x+\alpha \ell)|} = \frac{\beta}{\max\limits_{-N_\alpha\leq k\leq {N_\alpha}}|g(x+\alpha k)|}.
    \end{equation*}

Finally, the statement on uniqueness will follow from the more general Theorem \ref{thm: allL1minimizers}.
\end{proof}

In our current construction,  we defined $k_x$ as the smallest index in $J_x$ (see \eqref{def: J_x}). This choice is arbitrary whenever $J_x$ contains more than one index, and only serves to produce a particular minimizer. Varying such a choice gives rise to all possible $L^1$ minimizers in the non-unique case. The following theorem  characterizes all possible $L^1$-minimizing duals.

\begin{theorem}\label{thm: allL1minimizers}
Let $\mathcal{G}(g,\alpha,\beta)$ be a Gabor frame with $\supp(g)=[-1,1]$ and $\alpha\leq 2$, $\beta\leq 1/2$.
Let $J_x$ be as in \eqref{def: J_x}. Then, up to sets of measure zero, 
a dual window $h$ is $L^1$-minimizing if and only if there exists a
measurable family of weights $w(x)=(w_k(x))_{k\in J_x}$ with
\begin{equation*}\label{eq: weight}
w_k(x)\geq 0 \quad \text{and} \quad
\qquad
\sum_{k\in J_x} w_k(x)=1,
\end{equation*}
such that 
\begin{equation}\label{def: general h}
h(x+\alpha k)
=
\begin{cases}
\frac{\beta\, w_k(x)}{ {g(x+\alpha k)}}, & k\in J_x,\\ 
0, & k\notin J_x.
\end{cases}
\end{equation}
\end{theorem}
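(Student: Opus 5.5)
The plan is to reduce everything to the pointwise $\ell^1$ problem from Lemma \ref{L:bound} and then analyze when equality holds in the chain of inequalities \eqref{eq: > beta}.

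\textbf{Sufficiency.} Let $h$ be of the form \eqref{def: general h}. Then $h$ is supported in $[-1,1]$, since every $k\in J_x$ satisfies $x+\alpha k\in[-1,1]$ (the maximum is positive, and $g$ vanishes off $[-1,1]$). Also $h$ is bounded: for $k\in J_x$ we have $|g(x+\alpha k)|=M_\alpha(x)$, and the estimate \eqref{eq: for the bessel bound} applies verbatim with $w_k\le 1$. Hence $\mathcal G(h,\alpha,\beta)$ is Bessel. The $n\neq0$ conditions in \eqref{eq: characterization dual} hold trivially by the support argument of Proposition \ref{P:support}. The $n=0$ condition reads
\[
\sum_{k\in J_x} g\cdot \frac{\beta w_k}{g}=\beta\sum_{k\in J_x} w_k=\beta .
\]
Finally,
\[
\sum_k |h(x+\alpha k)|=\frac{\beta}{M_\alpha(x)}\sum_{k\in J_x} w_k=\frac{\beta}{M_\alpha(x)},
\]
so equality holds in \eqref{E:lower_bound} a.e., and Lemma \ref{L:bound} gives minimality. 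Measurability of $h$ requires that the weights be chosen measurably, which is part of the hypothesis. Here I would remark that $\{x: k\in J_x\}$ is measurable, exactly as the sets $U_k$ were shown to be in Construction \ref{cons: set_I}.

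\textbf{Necessity.} Let $h$ be an $L^1$-minimizing dual. By Proposition \ref{P:support}, $\supp(h)\subset[-1,1]$. The $h$ of Theorem \ref{thm: main thm painless} attains $\int_0^\alpha \beta/M_\alpha(x)\,dx$. Since every dual supported in $[-1,1]$ satisfies the pointwise bound \eqref{E:lower_bound}, and $\|h\|_1$ equals the integral of the left side, minimality forces
\[
\sum_k|h(x+\alpha k)|=\frac{\beta}{M_\alpha(x)} \quad\text{for a.e. } x,
\]
because a nonnegative integrand with zero integral vanishes a.e. So both inequalities in \eqref{eq: > beta} are equalities a.e. There are two of them.

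First, $\sum_k|g_k||h_k|\le M_\alpha\sum_k|h_k|$, writing $g_k=g(x+\alpha k)$ and $h_k=h(x+\alpha k)$. Equality here forces $h_k=0$ whenever $|g_k|<M_\alpha(x)$, that is, for $k\notin J_x$.

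Second, $\bigl|\sum_k g_kh_k\bigr|\le\sum_k|g_kh_k|$. This is the triangle inequality; combined with $\sum_k g_kh_k=\beta>0$, equality forces each $g_kh_k$ to be a nonnegative real number.

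Now define $w_k(x):=g_kh_k/\beta\ge0$ for $k\in J_x$. The duality condition gives $\sum_{k\in J_x}w_k=1$, and solving for $h_k$ yields exactly \eqref{def: general h}. The weights are measurable because $g$ and $h$ are. The result holds up to null sets, since all the equalities were a.e.

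\textbf{Uniqueness and the main obstacle.} The uniqueness claim of Theorem \ref{thm: main thm painless} follows at once: if the values $|g(x+\alpha k)|$ are distinct, then $J_x$ is a singleton, so $w\equiv1$ and $h$ is forced. The only delicate point is the equality analysis in the triangle inequality for complex $g$. Under the paper's real-valued convention it reduces to sign agreement, and in general it is the standard equality case of the triangle inequality. Beyond that, the care is purely measure-theoretic: handling a.e. statements and the measurable dependence of $J_x$ on $x$.
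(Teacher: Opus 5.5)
Your proposal is correct and follows the paper's proof essentially step by step. For sufficiency you use the support and Bessel argument together with equality in \eqref{E:lower_bound} and Lemma \ref{L:bound}. For necessity you use Proposition \ref{P:support}, compare with the minimizer of Theorem \ref{thm: main thm painless} to force pointwise equality a.e., and then use the two equality cases in \eqref{eq: > beta} to define $w_k=g_kh_k/\beta$. Your remarks on why equality holds a.e. (a nonnegative integrand with zero integral) and on the measurability of the weights fill in details the paper leaves implicit.
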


Note that, in particular, Theorem \ref{thm: main thm painless} corresponds to the choice
\begin{equation*}
w_k(x)=
\begin{cases}
1, & k=k_x\in J_x,\\
0, & k\neq k_x.
\end{cases}
\end{equation*}
Furthermore, if $J_x$ is a singleton set for almost all $x$, then this is the only admissible family of weight functions $w_k$, from which the uniqueness statement of Theorem \ref{thm: main thm painless} follows.

\begin{proof}[Proof of Theorem \ref{thm: allL1minimizers}]
Let $h$ be defined by \eqref{def: general h}. Since $h(x+\alpha k)\not
=0$ only when $k\in J_x$,  and recall that $\supp(h)\subset [-1,1]$. 
    Fix $x\in[0,\alpha)$, then $k\in J_x$, implies
$x+\alpha k \in \supp(g)\subseteq [-1,1]$. Therefore, as in the proof of Theorem \ref{thm: main thm painless}, all duality conditions in \eqref{eq: characterization dual} for $n\neq 0$
are automatically satisfied, and it only remains to verify the case
$n=0$.
    By defining $h$ as in \eqref{def: general h}, we automatically obtain
\begin{equation}
    \sum_{k}  {g(x+\alpha k)}\, h(x+\alpha k)
    =
    \sum_{k\in J_x}  {g(x+\alpha k)}
    \frac{\beta}{ {g(x+\alpha k)}} w_k(x)
    =
    \beta \sum_{k\in J_x} w_k(x)
    =
    \beta.    
\end{equation}
The Bessel property follows as an adaptation of the argument \eqref{eq: for the bessel bound} in Theorem \ref{thm: main thm painless}. Indeed, 
$$\frac{\beta^2w_k(x)^2}{|g(x+\alpha k)|^2}
\leq
\frac{\beta^2}
{\max\limits_{-N_\alpha\leq j\leq N_\alpha}|g(x+\alpha j)|^2}
\leq
\frac{(2N_\alpha+1)\beta}{A},
$$
where $A$ is a lower frame bound for $\mathcal{G}(g,\alpha,\beta)$. Thus $h\in L^\infty(\mathbb{R})$, and since $\supp(h)\subseteq[-1,1]$, the Gabor system $\mathcal{G}(h,\alpha,\beta)$ is Bessel. Therefore, the identities above imply that $h$ is a dual Gabor window.
Moreover,
\begin{align*}
\sum_{k} |h(x+\alpha k)|
=
\beta
\sum_{k\in J_x} 
\frac{w_k(x)}{|g(x+\alpha k)|}=\frac{\beta}{\max\limits_{-N_\alpha\leq k\leq N_\alpha}|g(x+\alpha k)|}.    
\end{align*}
Thus, every such choice of weights $w$ produces an $L^1$-optimal solution, as the corresponding $h$ given by \eqref{def: general h} achieves equality in \eqref{E:lower_bound} for almost all $x$ (see Lemma \ref{L:bound}).

To prove the converse, let $h$ be an $L^1$-minimizing dual window. By Proposition \ref{P:support}, we  assume
$\supp(h)\subset[-1,1]$, and so the duality condition \eqref{eq: characterization dual} needs to be checked only for $n=0$, that is, gives
\begin{equation*}
    \sum_{k=-N_\alpha}^{N_\alpha}  {g(x+\alpha k)} h(x+\alpha k) = \beta  \qquad a.e. \, x\in [0,\alpha].
\end{equation*}
Lemma \ref{L:bound} implies the pointwise estimate
\begin{equation}\label{eq: geq}
    \sum_{k=-N_\alpha}^{N_\alpha} |h(x+\alpha k)|
    \geq
    \frac{\beta}{\max\limits_{-N_\alpha\leq k\leq N_\alpha} |g(x+\alpha k)|}.    
\end{equation}
Since $h$ minimizes $\|h\|_1$ (and we already proved in Theorem \ref{thm: main thm painless} the existence of a dual window reaching the equality in \eqref{eq: geq}), then equality in \eqref{eq: geq} must hold for almost every
$x\in[0,\alpha]$; otherwise the integral defining $\|h\|_1$ could be
strictly decreased. At this point, after having identified sufficient conditions that should be satisfied for a.e. $x\in [0,\alpha]$, let us simplify our notation:
Fix $x\in[0,\alpha]$, for $-N_\alpha\leq k\leq N_\alpha$ let 
\begin{equation*}
    g_k := g(x+\alpha k), \qquad h_k := h(x+\alpha k),
\qquad
m := \max_{-N_\alpha\leq k\leq N_\alpha} |g_k|.    
\end{equation*}
 Hence $h$ is characterized by 
\begin{equation}\label{eq: h_k}
    \sum_{k=-N_\alpha}^{N_\alpha}  {g_k}h_k=\beta, \qquad \text{and} \qquad \sum_{k=-N_\alpha}^{N_\alpha} |h_k|
=
\frac{\beta}{m}  .
\end{equation}
Observe that
\begin{equation*}
\beta
=
\left|\sum_{k=-N_\alpha}^{N_\alpha}  {g_k} h_k\right|
\leq
\sum_{k=-N_\alpha}^{N_\alpha} |g_k||h_k|
\leq
m\sum_{k=-N_\alpha}^{N_\alpha} |h_k|.    
\end{equation*}
Equality must hold in the
inequalities above due to the second condition in \eqref{eq: h_k}. Consequently, since $|g_k|\leq m$, we have that
$h_k=0$ whenever $|g_k|<m$, hence $h_k=0$ for $k\notin J_x$. Additionally, for $k\in J_x$,
the equality in the triangle inequality (i.e., $\left|\sum_{k=-N_\alpha}^{N_\alpha}  {g_k} h_k\right|
=
\sum_{k=-N_\alpha}^{N_\alpha} | {g_k}h_k|$) implies that the values $ {g_k} h_k$ have the same sign. Since their sum equals $\beta$ (due to the first condition in \eqref{eq: h_k}), we have  $ {g_k} h_k\geq 0$.
Defining
\begin{equation*}
w_k(x) := \frac{ {g_k} h_k}{\beta}, \qquad k\in J_x,
\end{equation*}
we have $w_k(x)\ge0$, $
\sum_{k\in J_x} w_k(x) = 1$, and 
$h_k = \frac{\beta w_k(x)}{ {g_k}}$ (for  $k\in J_x$), 
which is precisely the form \eqref{def: general h}.
\end{proof}

\begin{corollary}\label{coro: main corollary painless support}
    Let $\mathcal{G}(g,\alpha,\beta)$ be a Gabor frame with $\supp{(g)} = [-1,1]$ and $\alpha \leq 2$, $\beta \leq 1/2$.
    All $L^1$-minimizing duals $h$ that also have support of minimal possible measure, that is, $|\supp(h)|=\alpha$ are of the form \eqref{def: general h} with weight vector $w(x)$ satisfying
$w_k(x)\in\{0,1\}$
(i.e., with exactly one index $k\in J_x$ such that $w_k(x)=1$). 
\end{corollary}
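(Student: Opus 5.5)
The plan is to combine Theorem \ref{thm: allL1minimizers}, which gives the general form of every $L^1$-minimizer, with the periodization/measure argument from Proposition \ref{P:support}. Let $h$ be an $L^1$-minimizing dual with $|\supp(h)|=\alpha$. By Theorem \ref{thm: allL1minimizers}, $h$ has the form \eqref{def: general h} for some measurable weights $w(x)$, $x\in U\subseteq[0,\alpha)$, with $w_k(x)\ge 0$ and $\sum_{k\in J_x}w_k(x)=1$. Define the counting function
\[
n(x):=\#\{k\in\Z:\ h(x+\alpha k)\neq 0\}=\#\{k\in J_x:\ w_k(x)>0\},\qquad x\in[0,\alpha).
\]
This function is measurable because the $w_k$ are. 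Since $g(x+\alpha k)\ne 0$ for $k\in J_x$, the second equality holds by \eqref{def: general h}. Because the weights sum to $1$, $n(x)\ge 1$ for a.e.\ $x$.

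The key step is the identity
\[
|\supp(h)|=\int_0^\alpha n(x)\,dx .
\]
This follows from the same decomposition as in Proposition \ref{P:support} and Construction \ref{cons: set_I}. Write $E=\supp(h)=\bigcup_k E_k$ with $E_k=E\cap[\alpha k,\alpha(k+1))$. Then $|E|=\sum_k|E_k-\alpha k|=\sum_k\int_0^\alpha \mathbf 1_{E_k-\alpha k}(x)\,dx$, and Tonelli lets us exchange sum and integral; for each $x$ the inner sum $\sum_k \mathbf 1_{E_k-\alpha k}(x)$ equals $n(x)$. Given this identity and $n\ge 1$ a.e., we have $\alpha=|\supp(h)|=\int_0^\alpha n$ with $n-1\ge 0$, so $n(x)=1$ for a.e.\ $x$. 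Thus for a.e.\ $x$ exactly one $w_k(x)$ is positive, and the normalization forces it to equal $1$, giving $w_k(x)\in\{0,1\}$.

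Conversely, suppose the weights take values in $\{0,1\}$ with exactly one equal to $1$. Theorem \ref{thm: allL1minimizers} then gives an $L^1$-minimizer with $n\equiv 1$, so the identity yields $|\supp(h)|=\alpha$. By the first part of Proposition \ref{P:support} this is the minimal possible support measure among all dual windows, so these duals are exactly the ones described in the statement.

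The main obstacle is handling the measure-theoretic details of the counting identity: measurability of $n$, and the fact that $h$ and the weights are only defined up to null sets. I would handle this as the paper does, discarding a null set where the representation \eqref{def: general h} fails and working with a fixed measurable representative.
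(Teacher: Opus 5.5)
Your proof is correct and follows essentially the same route as the paper. Both invoke Theorem \ref{thm: allL1minimizers} and then compare $|\supp(h)|=\sum_k|E_k|$, where $E_k=\{x\in[0,\alpha):w_k(x)>0\}$, against $\alpha=|\bigcup_k E_k|$; your identity $|\supp(h)|=\int_0^\alpha n(x)\,dx$ is just $\sum_k|E_k|=\int_0^\alpha\sum_k\mathbf 1_{E_k}$, and it handles the overlap slightly more cleanly than the paper's extraction of a positive-measure set $A$ on which two specific weights are positive.
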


In essence, from all $L^1$-minimizing dual windows characterized by Theorem \ref{thm: allL1minimizers}, only those corresponding to choices 
of weights $w$ that
are vertices of the simplex
$\Delta_x:=\{ w(x)=(w_k(x))_{k\in J_x}: \,
w_k(x)\geq 0,
\,
\sum_{k\in J_x} w_k(x)=1\}$
for almost every $x$,
produce a dual window with minimal possible support.

\begin{proof}[Proof of Corollary \ref{coro: main corollary painless support}]
Theorem \ref{thm: main thm painless} constructs an $L^1$-minimizing dual window supported on  $I\subset[-1,1]$ with $|I|=\alpha$, proving the
existence of a minimizer with support measure $\alpha$.
By Theorem \ref{thm: allL1minimizers}, any $L^1$-minimizing dual window $h$ is obtained as in \eqref{def: general h} by choosing 
weights $w_k(x)\ge0$ for $k\in J_x$ satisfying
$\sum_{k\in J_x} w_k(x)=1$.
For each $k\in\{-N_\alpha,\dots,N_\alpha\}$ define
$$
E_k:=\{x\in[0,\alpha):\ w_k(x)>0\}.
$$
In particular, since
$\sum_{k\in J_x} w_k(x)=1$ implies at least one $w_k(x)>0$ for a.e. $x$, we have that,  $|\bigcup_{k=-N_\alpha}^{N_\alpha}E_k|=|[0,\alpha)|=\alpha$.
Then, up to sets of measure zero, 
$$\supp(h)=
\bigcup_{k=-N_\alpha}^{N_\alpha}\,\bigl( E_k+\alpha k \bigr),$$
and the sets $E_k+\alpha k$ are pairwise disjoint (since they lie in distinct
intervals $[\alpha k,\alpha(k+1))$). Consequently,
$$
|\supp(h)|
=
\sum_{k=-N_\alpha}^{N_\alpha} |E_k|\geq \Big|\bigcup_{k=-N_\alpha}^{N_\alpha}E_k\Big|=\alpha.
$$
Assume there exists a measurable set $A\subset[0,\alpha)$ with $|A|>0$ and two
distinct indices $k_1\neq k_2$ such that $w_{k_1}(x)>0$ and $w_{k_2}(x)>0$ for
all $x\in A$. 
Then, $A\subset E_{k_1}\cap E_{k_2}$, so every point of $A$ is counted at least twice in the sum $\sum_k |E_k|$. We obtain
\begin{align*}
|\supp(h)|
&= |E_{k_1}|+|E_{k_2}|+\sum_{k\neq k_1,k_2}|E_k|=2|A|+|E_{k_1}\setminus A|+|E_{k_2}\setminus A|+\sum_{k\neq k_1,k_2}|E_k|\\
&\geq \Big|\bigcup_{k=-N_\alpha}^{N_\alpha}E_k\Big|+|A|
=|A|+\alpha
>\alpha.    
\end{align*}

Conversely, suppose that for almost every $x\in[0,\alpha)$ there is exactly
one index $k(x)\in J_x$ such that $w_{k(x)}(x)=1$ and $w_k(x)=0$ for $k\neq 
k(x)$. Then the sets $E_k$ are pairwise disjoint, up to sets of measure zero, and their
union is $[0,\alpha)$. Hence
$$
|\supp(h)|
=
\sum_{k=-N_\alpha}^{N_\alpha}|E_k|
=
\Big|\bigcup_{k=-N_\alpha}^{N_\alpha}E_k\Big|
=
\alpha.$$
\end{proof}

\begin{example}\label{example: cosine}
Let 
$g(x)=\mathbf{1}_{[-1,1]}(x)\cos\!\left(\frac{\pi}{2}x\right)$,
and take $\alpha=1$, $\beta=\frac{1}{2}$. Then $\mathcal{G}(g,1,\frac{1}{2})$ is a Gabor frame in the painless regime. Moreover, $\{|g(x+k)|^2\}$ is a partition of the unity:
\begin{equation*}
\sum_{k\in\mathbb{Z}} |g(x+k)|^2
=
\cos^2\!\left(\tfrac{\pi}{2}x\right)
+
\sin^2\!\left(\tfrac{\pi}{2}x\right)
=1,
\end{equation*}
so the frame operator is a multiplication operator with $ \frac{1}{\beta}G_0^{g,g}(x)=2$.
We now determine an $L^1$-minimizing dual window using Theorem \ref{thm: main thm painless}. For $x\in[0,1]$, the relevant values are
\begin{equation*}
|g(x)|=\cos\!\left(\tfrac{\pi}{2}x\right),
\qquad
|g(x-1)|=\sin\!\left(\tfrac{\pi}{2}x\right).
\end{equation*}
Hence
\begin{equation*}
|g(x)|>|g(x-1)| \quad \text{for } x\in\left[0,\tfrac{1}{2}\right),
\quad \text{
while} \quad
|g(x)|<|g(x-1)| \quad \text{for } x\in\left(\tfrac{1}{2},1\right].
\end{equation*}
Thus, up to sets of measure zero, the maximizing index is unique for every $x\in[0,1]$, and the corresponding selection set is
\begin{equation*}
I=\left(-\tfrac{1}{2},\tfrac{1}{2}\right).
\end{equation*}
Therefore the dual window constructed in Theorem \ref{thm: main thm painless} is
\begin{equation}\label{eq: h example cos}   
h(x)
=
\begin{cases}
\frac{1}{2}\,\sec\!\left(\frac{\pi}{2}x\right), & x\in\left(-\frac{1}{2},\frac{1}{2}\right),\\
0, & \text{otherwise}.
\end{cases}
\end{equation}
Since the maximizing index is unique for almost every $x\in[0,1]$, this $L^1$-minimizing dual window is unique. Furthermore, its $L^1$-norm is
\begin{equation}\label{eq: norm 1 log}
    \|h\|_1
=\frac12\int_{-1/2}^{1/2}
\sec\left(\frac{\pi x}{2}\right) \,dx
=\frac{2}{\pi}\log(1+\sqrt{2}).
%<\frac{2}{\pi}.    
\end{equation}
Finally, note that this frame $\G(g,1,\frac{1}{2})$ is tight, and the canonical dual window is $\widetilde{g}(x)=\frac12\mathbf{1}_{[-1,1]}(x)\cos\!\left(\frac{\pi}{2}x\right)$. Thus, $\widetilde{g}$ does not coincide with the unique $L^1$ minimizer $h$ given by \eqref{eq: h example cos}. Moreover, the  $L^1$-norm of the canonical dual is \begin{equation}\label{eq: can norm 2/pi}
    \|\widetilde g\|_1
=\frac12\int_{-1}^{1}\cos\left(\frac{\pi x}{2}\right)\,dx
=\frac{2}{\pi}.
\end{equation}
Therefore, by simply comparing \eqref{eq: norm 1 log} and \eqref{eq: can norm 2/pi}, since $\frac{2}{\pi}>\frac{2}{\pi}\log(1+\sqrt{2})$, one can verify that, in this example, the canonical dual window does not minimize the $L^1$-norm.

\end{example}

\begin{example}\label{example: non !}
A simple example with non-unique $L^1$-minimizing dual windows is given by $g(x)=\mathbf{1}_{[-1,1]}(x)$, with $\alpha=1$ and $\beta=\frac{1}{2}$.
Then $\mathcal{G}(g,1,\frac{1}{2})$ is a Gabor frame in the painless regime. Indeed, for almost every $x\in\mathbb{R}$,
\begin{equation*}
\sum_{k\in\mathbb{Z}} |g(x+k)|^2 = 2,
\end{equation*}
and therefore $\frac{1}{\beta}G_0^{g,g}(x)= 4$.
For $x\in[0,1]$, the only relevant translates are $x$ and $x-1$, and
\begin{equation*}
|g(x)|=|g(x-1)|=1
\qquad \text{for a.e. } x\in(0,1).
\end{equation*}
Hence, for almost every $x\in(0,1)$, the set of maximizing indices is $J_x=\{0,-1\}$.
Thus the maximizing index is not unique on a set of positive measure, and by Theorem \ref{thm: allL1minimizers} the $L^1$-minimizing dual window is not unique.
More precisely, every $L^1$-minimizing dual window is obtained by choosing a measurable function $w:[0,1]\to[0,1]$
and defining
\begin{equation*}
h(x)=
\begin{cases}
\frac{1}{2}\, w(x), & x\in[0,1],\\ 
\frac{1}{2}\, \bigl(1-w(x+1)\bigr), & x\in[-1,0),\\
0, & \text{otherwise}.
\end{cases}
\end{equation*}
For example, the following are three distinct $L^1$-minimizing dual windows:
\begin{equation*}
h_1(x)=
\begin{cases}
\frac{1}{2}, & x\in[0,1],\\
0, & \text{otherwise};
\end{cases}
\qquad
h_2(x)=
\begin{cases}
\frac{1}{2}, & x\in[-1,0),\\
0, & \text{otherwise};
\end{cases} \qquad 
h_3(x)=
\begin{cases}
\frac14, & x\in[-1,1],\\
0, & \text{otherwise}.
\end{cases}
\end{equation*}
Each of these has $\|h_j\|_1=\frac{1}{2}.$
Finally, among the $L^1$-minimizing dual windows constructed above, the ones with minimal support are $h_1$ and $h_2$. Indeed, $|\supp(h_1)|=1=|\supp(h_2)|$, and  $|\supp(h_3)|=2$.
Since in this example $\alpha=1$, Corollary \ref{coro: main corollary painless support} implies that the minimal possible support measure of an $L^1$-minimizing dual window is $1$. Thus $h_1$ and $h_2$ attain the minimal possible support, whereas $h_3$ does not.
Moreover, we note that the canonical window is $\widetilde{g}=h_3$. Thus, unlike Example \ref{example: cosine}, the canonical dual is an $L^1$-minimizing dual window in this example. However, it is not the unique $L^1$-minimizer and, in this example, it does not have minimal possible support.
\end{example}

\section{Painful Case}\label{sec: painful}

In this section, we investigate dual windows with minimal $L^1$-norm when the frame operator for $\mathcal{G}(g,\alpha,\beta)$ is not a simple multiplication operator. Throughout this section, we will fix the support of $g$ as in the previous section, that is, $\supp(g) \subseteq [-1,1]$, and $\alpha = 1$. It is not immediately clear in this setting that the dual window with minimal $L^1$-norm will have compact support. In fact, it is not even clear that any dual windows have compact support. In Section \ref{sec: discussion}, we discuss an apparent contradiction in the existing literature on this matter.

The main theorem of this section classifies minimal $L^1$-norm dual windows to $\mathcal{G}(g,1,\beta)$ when $\beta = 2/3$. 

By $I_{[-1/2,0]}$ and $I_{[-1,-1/2]}$ we will denote the sets
\begin{equation}\label{eq: int I}
\begin{split}
I_{[-1/2,0]} &= \bigg\{
x\in [-\tfrac{1}{2},0] \; : \;
\frac{2}{3|g(x)|}
\leq
\frac{2}{3|g(x+1)|}
\bigg(1+\left|\frac{g(x-\frac12)}{g(x+\frac12)}\right|\bigg)
\bigg\} \\
I_{[-1,-1/2]} &= \left\{
x\in [-1,-\tfrac{1}{2}]: \, 
\frac{2}{3|g(x+1)|}
\leq
\frac{2}{3|g(x)|}
\left(1+\left|\frac{g(x+\frac32)}{g(x+\frac12)}\right|\right)
\right\}.
\end{split}
\end{equation}
We denote their complements by $J_{[-1/2,0]} = [-\frac{1}{2},0]\setminus I_{[-1/2,0]}$ and $J_{[-1,-1/2]} = [-1,-\frac{1}{2}]\setminus I_{[-1,-1/2]}$. 
For simplicity, throughout this section one can assume that
$g(x)\neq 0$ for a.e. $x\in(-1,1)$,
so that all the quotients appearing in \eqref{eq: int I} are well defined almost everywhere.
On the exceptional null set where a denominator vanishes, one would need to set the window $h$ in \eqref{eq: h pain} below simply as $h=0$.

\begin{theorem}\label{thm: main painful}
Let $g \in L^\infty(\R)$ be a real-valued function with support contained  in $[-1,1]$
%, $g(x)\neq0$ for a.e. $x\in(-1,1)$, 
and such that $\mathcal{G}(g,1,\frac{2}{3})$ is a Gabor frame. Then the function 
\begin{equation}\label{eq: h pain}
    h(x) = \begin{cases}
    \tfrac{2}{3g(x)} & x \in I_{[-1/2,0]} \cup (J_{[-1/2,0]}+1) \cup (I_{[-1,-1/2]}+1) \cup J_{[-1,-1/2]} \\ 
    -\tfrac{2g(x-\frac{5}{2})}{3g(x-\frac{3}{2})g(x-1)} & x\in (J_{[-1/2,0]} + 2)\\ 
    \tfrac{2g(x+\frac{5}{2})}{3g(x+\frac{3}{2})g(x+1)} & x\in (J_{[-1,-1/2]} -1)\\ 
    0 & \text{otherwise}
    \end{cases}
\end{equation}
is a dual window to $\mathcal{G}(g,1,\frac{2}{3})$ with minimal $L^1$-norm. 
\end{theorem}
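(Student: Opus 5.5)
The proof will follow the strategy of Section \ref{sec: painless}: reduce the duality conditions \eqref{eq: characterization dual} to a family of finite-dimensional $\ell^1$ problems indexed by a fundamental domain, solve each one, and recognize $h$ in \eqref{eq: h pain} as the pointwise solution. With $\alpha=1$ and $1/\beta=3/2$, the condition for index $n$ reads
$$\sum_k g(x+\tfrac{3n}{2}+k)\,h(x+k)=\tfrac23\delta_{n,0}.$$
Because $\supp(g)\subseteq[-1,1]$, each such equation involves at most two nonzero terms. All points involved lie on the coset $y+\tfrac12\mathbb Z$. So I will fix $y\in[-1,-\tfrac12)$, say, and collect the unknowns $h_j:=h(y+\tfrac j2)$, $j\in\mathbb Z$. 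The equations become an infinite, banded linear system in $(h_j)_j$, with coefficients given by values of $g$ at points of $y+\tfrac12\mathbb Z\cap[-1,1]$. Since $\|h\|_1=\int_{-1}^{-1/2}\sum_j|h_j|\,dy$, it suffices to minimize $\sum_j|h_j|$ fiberwise.

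\textbf{Step 1 (feasibility).} I will check directly that $h$ in \eqref{eq: h pain} satisfies every equation. Fix $x\in[-\tfrac12,0]$. The $n=0$ equation couples $h(x)$ and $h(x+1)$, and there are two choices:
\begin{itemize}
\item[(a)] on $I_{[-1/2,0]}$, take $h(x)=\tfrac{2}{3g(x)}$ and $h(x+1)=0$;
\item[(b)] on $J_{[-1/2,0]}$, take $h(x+1)=\tfrac{2}{3g(x+1)}$ instead.
\end{itemize}
In case (b), the value at $x+1$ enters an $n=\pm1$ equation together with $g(x-\tfrac12)$ or $g(x+\tfrac12)$. That equation forces the correction term at $x+2$ in \eqref{eq: h pain}, and this correction involves only points where no further equation is disturbed, since $x+2+\tfrac{3n}{2}+k$ leaves $[-1,1]$. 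The analysis on $[-1,-\tfrac12]$ is symmetric and produces the term on $J_{[-1,-1/2]}-1$. This is a finite case check on which translates land in $[-1,1]$.

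Boundedness of $h$ (hence the Bessel property) follows from the frame condition \eqref{eq: implies bounded window} together with the defining inequalities of $I$ and $J$. The correction terms are bounded on $J$ precisely because there they are dominated by $\tfrac{2}{3|g(x)|}$. I expect this to need a lower bound on $|g|$ at the chosen points, obtained from the frame bounds. Compact support of $h$ then gives the Bessel property, exactly as in Proposition \ref{P:support}.

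\textbf{Step 2 (lower bound).} This is the main obstacle. An arbitrary dual $h$ need not be compactly supported, and the fiber system involves infinitely many unknowns. I will use weak LP duality for $\ell^1$ minimization. For each fiber I will construct multipliers $\lambda_n(y)$ for the equations such that, at every unknown $j$, the combined coefficient satisfies $\bigl|\sum_n\lambda_n(y)\,g(\cdot)\bigr|\le 1$. Then, for any feasible $(h_j)$,
$$\sum_j|h_j|\ \ge\ \Bigl|\sum_j h_j\sum_n\lambda_n g\Bigr| \;=\; \tfrac23\lambda_0(y).$$
The multipliers can be taken finitely supported: $\lambda_0$, plus $\lambda_{\pm1}$ in case (b). This makes the interchange of sums legitimate, because $h\in L^\infty$ and only finitely many $j$ carry nonzero coefficients.

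I will choose $\tfrac23\lambda_0(y)$ equal to the fiber cost of Step 1:
\begin{itemize}
\item in case (a), $\lambda_0=1/g(x)$ and all other multipliers vanish;
\item in case (b), $\lambda_0=1/g(x+1)$, with $\lambda_{\pm1}$ chosen to cancel the coefficient at $x+1$'s neighbour.
\end{itemize}
The inequalities defining $I_{[-1/2,0]}$ and $I_{[-1,-1/2]}$ should be exactly the conditions guaranteeing $\bigl|\sum_n\lambda_n g\bigr|\le1$ at the remaining unknowns. Verifying these sign and size constraints is the delicate computation, and I would do it first in the two subcases.

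\textbf{Step 3 (conclusion).} Integrating the fiberwise bound over $y$ gives $\|h'\|_1\ge\|h\|_1$ for every dual window $h'$. This proves that $h$ in \eqref{eq: h pain} is a dual window with minimal $L^1$-norm.
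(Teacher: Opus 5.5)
Your weak-duality framework is sound, and it would give a legitimate alternative to the paper's argument. The paper restricts to duals supported on $A\cup B$ via Theorem~\ref{thm: dual support} and then solves each three-variable problem with Lemma~\ref{lem: painful case min problem}. Your finitely supported multipliers apply to arbitrary duals and avoid the truncation lemma entirely. However, the multipliers you actually propose do not work.

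For $x\in(-\tfrac12,0)$ write $a=g(x)$, $b=g(x+1)$, $c=g(x-\tfrac12)$, $d=g(x+\tfrac12)$. The only equations involving $h(x),h(x+1),h(x+2)$ are $n=0$ and $n=-1$. With multipliers $\lambda_0,\mu$, a valid certificate needs $|\lambda_0a|\le1$, $|\lambda_0b+\mu c|\le1$ and $|\mu d|\le1$.
\begin{itemize}
\item \textbf{Case (a).} Taking $\mu=0$ forces $|b|\le|a|$, but $I_{[-1/2,0]}$ only guarantees $|b|\le|a|(1+|c/d|)$. For instance, $a=c=d=1$, $b=\tfrac32$ lies in $I_{[-1/2,0]}$, yet the coefficient at $x+1$ is $\tfrac32$.
\item \textbf{Case (b).} The choice $\lambda_0=1/b$ certifies only $\tfrac{2}{3|b|}$, while $h$ costs $\tfrac{2}{3|b|}(1+|c/d|)$ on that fiber. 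For $a=\tfrac1{10}$, $b=c=d=1$ the bound is $\tfrac23$ but the cost is $\tfrac43$. This also contradicts your own rule that $\tfrac23\lambda_0$ should equal the fiber cost.
\end{itemize}

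The correct certificates are as follows.
\begin{itemize}
\item \textbf{Case (a).} Take $\lambda_0=1/a$ and choose $|\mu|\le 1/|d|$ so that $b/a+\mu c\in[-1,1]$. This is possible exactly when $|b/a|-|c/d|\le1$, which is the $I$-inequality.
\item \textbf{Case (b).} Take $\lambda_0=\tfrac1b(1+|c/d|)$ and $\mu=-\mathrm{sgn}(c)/|d|$. The coefficients at $x+1$ and $x+2$ then have modulus $1$, and the coefficient at $x$ has modulus $\tfrac{|a|}{|b|}(1+|c/d|)\le1$, which is the $J$-inequality.
\end{itemize}
The same pattern, using $n=0$ and $n=1$, handles $x\in(-1,-\tfrac12)$. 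Also note that your half-integer fiber $y+\tfrac12\mathbb Z$ carries two $n=0$ equations, based at $y$ and at $y+\tfrac12$. In fact it splits into two decoupled integer cosets, so a single $\lambda_0(y)$ is the wrong bookkeeping.

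The boundedness (Bessel) step has a genuine gap. Condition \eqref{eq: implies bounded window} gives only $a^2+b^2\ge\beta A$ and $c^2+d^2\ge\beta A$. This is compatible with $a=d=\epsilon$, $b=c=1$, where both candidate costs, $\tfrac{2}{3\epsilon}$ and $\tfrac23(1+\tfrac1\epsilon)$, blow up. So no argument built from \eqref{eq: implies bounded window} and the $I$/$J$ inequalities can bound $h$. In particular, ``dominated by $\tfrac{2}{3|g(x)|}$'' is useless on $J_{[-1/2,0]}$, because a small $|g(x)|$ is precisely what puts you there.

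You need the full frame property, and the paper's device uses it cleanly:
\begin{itemize}
\item The canonical dual $\widetilde g$ is bounded, since $\mathcal G(\widetilde g,1,\tfrac23)$ is Bessel.
\item The triple $(\widetilde g(x),\widetilde g(x+1),\widetilde g(x+2))$ satisfies the same two block equations as $h$.
\item Pointwise $\ell^1$-minimality of $h$ then gives $|h(x)|+|h(x+1)|+|h(x+2)|\le 3\|\widetilde g\|_\infty$, and similarly on $(-1,-\tfrac12)$.
\end{itemize}
Hence $h\in L^\infty$ with compact support, and $\mathcal G(h,1,\tfrac23)$ is Bessel.
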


\smallskip

\subsection{Discussion on the support of the canonical dual window}\label{sec: discussion}

There is a subtle point in the literature concerning compact support of dual windows beyond the painless regime. In \cite{delprete1997rational,del1999estimates}, Del Prete states, in the rational case, that when the frequency step is large enough so that the frame operator is no longer a multiplication operator, the canonical dual does not have compact support. On the other hand, Bölcskei proved a sharp criterion (in the rationally oversampled setting)  \cite{bolcskei1999necessary}: for a compactly supported window, the canonical dual is compactly supported if and only if the associated Zibulski-Zeevi matrix is unimodular. In particular, he gives an explicit example (with $\alpha\beta=\frac{2}{3}$) where the frame operator is not a multiplication operator, yet the canonical dual is compactly supported. Thus, the broad statement \textit{``in the painful case the canonical dual cannot have compact support''}  has to be read with caution. In the appendix, we revise the example provided \cite{bolcskei1999necessary} by using more direct calculation (see Example \ref{example compact supp}).

A related but different perspective is provided by Christensen, Kim, and Kim. In \cite{christensen2010gabor}, for windows $g$ supported on $[-1,1]$, in the normalized setting $\alpha=1$, they prove the following. 

\begin{lemma}\cite[Proof of Theorem 2.1]{christensen2010gabor}\label{lem: Ole}
    Let $\G(g,1,\beta)$ be a Gabor frame with $1/2 < \beta < 1$ and $\supp(g) \subset [-1,1]$. Let ${M} \in \N$ be the integer such that 
    \begin{equation*}
    \tfrac{{M}-1}{{M}} \leq \beta{<} \tfrac{{M}}{{M}+1}.
 \end{equation*}
 If $\mathcal{G}(h,1,\beta)$ is a dual Gabor frame to $\mathcal{G}(g,1,\beta)$, then so is $\mathcal{G}(h\mathbf{1}_J,1,\beta)$ where $J$ is the set 
 \begin{equation}\label{def: J Ole}
 J =\bigg(\bigcup_{k=1}^{{M}-1} \left[-(k+1), -\tfrac{k}{\beta}\right] \bigg) \cup [-1,1] \cup \bigg(\bigcup_{k=1}^{{M}-1} \left[\tfrac{k}{\beta}, k+1\right] \bigg) .     
 \end{equation}
\end{lemma}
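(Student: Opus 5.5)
The plan is to work directly with the duality characterization \eqref{eq: characterization dual} with $\alpha=1$, so that for each $n\in\Z$ and a.e. $x\in[0,1)$,
\[
\sum_{k\in\Z} g(x+\tfrac{n}{\beta}+k)\,h(x+k)=\beta\delta_{n,0}.
\]
The key observation is that multiplying $h$ by $\mathbf 1_J$ does not affect any of these equations. I will first show, for fixed $n$ and $x$, that every product $g(x+\tfrac n\beta+k)h(x+k)$ with $x+k\notin J$ vanishes identically.

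This requires $x+\tfrac n\beta+k\notin[-1,1]$ whenever $x+k\notin J$. Write $y=x+k$. We need: if $y\notin J$, then $y+\tfrac n\beta\notin[-1,1]$ for every $n\in\Z$. Equivalently, $y\in\bigcup_n\big([-1,1]-\tfrac n\beta\big)$ forces $y\in J$.

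Because the equations are reorganizations of $S_{g,h}=I$ through the Walnut formula, it suffices to verify that $h$ enters the $n$-th equation only through its values on $[-1,1]-\tfrac n\beta$. The real content is therefore geometric: determine which translates $[-1-\tfrac n\beta,\,1-\tfrac n\beta]$ actually matter, since the union over all $n$ is unbounded. That alone cannot give the bounded set $J$, so a second ingredient is needed.

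The second ingredient is to combine equations. For $|y|$ large, the value $h(y)$ appears in several equations coupled with other translates, and truncation must preserve every equation, not just leave the set of $h$-values untouched. I would fix $x\in[0,1)$ and treat the unknowns $h_k=h(x+k)$ as a bi-infinite vector $v$. The equations then read
\[
\sum_k g_{n,k}h_k=\beta\delta_{n,0},\qquad g_{n,k}=g(x+\tfrac n\beta+k),
\]
where $g_{n,k}$ is nonzero only for $|x+\tfrac n\beta+k|\le1$. Each row $n$ involves at most two consecutive indices $k$, since the window has length $2$ and $\alpha=1$.

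I will show that the rows fall into two types:
\begin{itemize}
\item rows whose support lies entirely in the indices $k$ with $x+k\in J$;
\item rows that only contain indices $k$ with $x+k\notin J$.
\end{itemize}
The case split uses $\tfrac{M-1}{M}\le\beta<\tfrac M{M+1}$. From it one checks that $\tfrac k\beta\le k+1$ exactly for $k\le M-1$, and that the window around $-\tfrac n\beta$ straddles indices crossing the boundary of $J$ only when $|n|\le M-1$.

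Assuming this dichotomy, the right-hand sides of the second type of row are all zero, because $n\ne0$ for them (row $0$ sits in $[-1,1]\subset J$). Replacing $h_k$ by $0$ off $J$ therefore keeps those equations valid, and it leaves the first type unchanged. Finally, $h\mathbf 1_J$ is bounded, since $h\in L^\infty$ by the Bessel property, and compactly supported, so $\G(h\mathbf1_J,1,\beta)$ is Bessel. It is then a dual frame by the Janssen/Ron--Shen characterization, exactly as in Proposition \ref{P:support}.

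The main obstacle is the combinatorial dichotomy. I expect it to fail as literally stated, because chains of two-term rows can link $J$ to its complement. If it does fail, the approach is to show that the system decouples. Along a chain leaving $J$, two-term homogeneous equations
\[
g_{n,k}h_k+g_{n,k+1}h_{k+1}=0
\]
propagate outward. One then argues that the subsystem on the complement of $J$ is homogeneous and does not feed back into $J$, so zero is an admissible solution there. I would check this carefully at the gap endpoints $-\tfrac k\beta$ and $k+1$.
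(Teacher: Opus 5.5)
\textbf{There is a genuine gap: the dichotomy you assume is the entire content of the lemma, and you leave it unproved.} The paper gives no proof of its own; it quotes the lemma from \cite{christensen2010gabor}. Your strategy is the natural one: show that truncation by $\mathbf{1}_J$ leaves every equation in \eqref{eq: characterization dual} intact. For a.e.\ $x$, the $n$-th equation with $\alpha=1$ reads $g(y+\tfrac n\beta)h(y)+g(y+1+\tfrac n\beta)h(y+1)=\beta\delta_{n,0}$, where $y\in x+\Z$ is fixed by $y+\tfrac n\beta\in[-1,0)$. Truncation is harmless exactly when no row with $n\neq 0$ pairs a point of $J$ with a point of $\R\setminus J$. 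You only assume this, then predict it fails and fall back on a ``decoupling along chains''. That fallback is not a separate argument. If some row with $n\neq0$ had $y\in J$ and $y+1\notin J$, truncation would turn it into $g(y+\tfrac n\beta)h(y)=0$. No property of the homogeneous subsystem on $\R\setminus J$ can guarantee that. So the proposal is incomplete at its only nontrivial step.

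The dichotomy does hold, exactly as you first stated it, and the check is short.

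\emph{Rows $1\le n\le M-1$.} Take $y\in[-1-\tfrac n\beta,-\tfrac n\beta]$. Since $\tfrac1\beta>1$, the piece $[-1-\tfrac n\beta,-(n+1))$ lies in the gap $(-\tfrac{n+1}\beta,-(n+1))$ of $J$ (or below $-M$ when $n=M-1$). Meanwhile $[-(n+1),-\tfrac n\beta]\subset J$, so $y\in J$ iff $y\ge-(n+1)$. Now $y+1\in[-\tfrac n\beta,1-\tfrac n\beta]$, and $(-\tfrac n\beta,-n)$ is a gap of $J$. We have $-n\le 1-\tfrac n\beta\le-\tfrac{n-1}\beta$. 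The first inequality is $\beta\ge\tfrac n{n+1}$, which is where $n\le M-1$ and $\beta\ge\tfrac{M-1}M$ enter; the second is $\tfrac1\beta\ge1$. Hence $[-n,1-\tfrac n\beta]\subset J$, and $y+1\in J$ iff $y+1\ge-n$, i.e.\ iff $y\ge-(n+1)$ again. The two conditions coincide, so no such row is mixed.

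\emph{Rows $n\ge M$.} Since $\beta<\tfrac M{M+1}$, we get $1-\tfrac n\beta\le1-\tfrac M\beta<-M$. So both points lie outside $J\subset[-M,M]$.

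\emph{Remaining rows.} For negative $n$, negation maps the pairs $\{y,y+1\}$ of row $-n$ onto the pairs of row $n$, and $J=-J$. Row $n=0$ involves only $[-1,1]\subset J$.

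Thus, up to null sets, every row with $n\neq0$ lies entirely in $J$ or entirely outside it. Rows inside $J$ are unchanged by truncation, and rows outside become $0=0$. With this inserted, the rest of your argument completes the proof: $h\in L^\infty$ by the Bessel bound, so $h\mathbf{1}_J$ is bounded and compactly supported, hence Bessel, and the Janssen/Ron--Shen characterization applies. No chain argument is needed.
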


From Lemma \ref{lem: Ole}, the following is immediate. 

\begin{theorem}\label{thm: dual support}
    Let $\G(g,1,\beta)$ be a Gabor frame with $1/2 < \beta < 1$ and $\supp(g) \subset [-1,1]$. Let $J$ be as in \eqref{def: J Ole}. Then every dual window to $\G(g,1,\beta)$ with minimal $L^p$-norm is supported on $J$ for $1 \leq p < \infty$. In particular, the canonical dual window to $\G(g,1,\beta)$ is supported on $J$. 
\end{theorem}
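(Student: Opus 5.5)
The proof reduces to the truncation argument already used in Proposition \ref{P:support}, now fed by Lemma \ref{lem: Ole}. Fix $1\le p<\infty$ and let $h$ be a dual window of $\G(g,1,\beta)$ with minimal $L^p$-norm among all dual windows. By Lemma \ref{lem: Ole}, $\G(h\mathbf{1}_J,1,\beta)$ is again a dual Gabor frame, so $h\mathbf{1}_J$ is an admissible competitor. Hence $\|h\|_p\le \|h\mathbf{1}_J\|_p$.

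On the other hand, we have
\begin{equation*}
\|h\|_p^p=\|h\mathbf{1}_J\|_p^p+\|h\mathbf{1}_{\R\setminus J}\|_p^p\ \ge\ \|h\mathbf{1}_J\|_p^p .
\end{equation*}
Combining the two inequalities forces $\|h\mathbf{1}_{\R\setminus J}\|_p=0$. Therefore $h=0$ almost everywhere on $\R\setminus J$, that is, $h$ is supported on $J$ in the almost-everywhere sense fixed in Section \ref{sec: Intro}. If no minimizer exists for some $p$, the statement is vacuous for that $p$.

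For the canonical dual we take $p=2$. By the Daubechies--Landau--Landau result recalled in Section \ref{sec: Intro}, $\widetilde g=S_{g,g}^{-1}g$ minimizes $\|\cdot\|_2$ among all dual windows. Applying the first part with $p=2$ shows that $\widetilde g$ is supported on $J$. Since $L^2$ minimizers are unique by strict convexity, this also gives $\widetilde g=\widetilde g\mathbf{1}_J$ directly.

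The only point needing care is that Lemma \ref{lem: Ole} gives full duality of $h\mathbf{1}_J$, not just the correlation identities \eqref{eq: characterization dual}; in particular, $\G(h\mathbf{1}_J,1,\beta)$ must be a Bessel sequence. If the lemma is read as only giving \eqref{eq: characterization dual}, we argue as in Proposition \ref{P:support}. Since $\G(h,1,\beta)$ is Bessel, $h\in L^\infty(\R)$. The set $J$ is bounded, so $h\mathbf{1}_J$ is bounded and compactly supported, hence $\G(h\mathbf{1}_J,1,\beta)$ is Bessel. The Janssen/Ron--Shen characterization then yields duality. Beyond this check there is no real obstacle.
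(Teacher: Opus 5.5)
Your argument is correct and is essentially the paper's. The paper calls the theorem immediate from Lemma \ref{lem: Ole}, which means the truncation comparison already used in Proposition \ref{P:support}: $h\mathbf{1}_J$ is again a dual window, and $\|h\mathbf{1}_J\|_p\le\|h\|_p$ with equality iff $h=0$ a.e.\ off $J$. The claim about the canonical dual then follows from the Daubechies--Landau--Landau $L^2$-minimality of $\widetilde g$.

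One point worth stating explicitly is that the minimal value is finite. Your boundedness remark already gives this: $h\mathbf{1}_J$ is bounded with compact support, so it lies in every $L^p$. That finiteness is what makes the step forcing $\|h\mathbf{1}_{\R\setminus J}\|_p=0$ legitimate.
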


More generally, assuming the existence of one compactly supported dual window, Stoeva gives a characterization of all compactly supported dual windows of a Gabor frame; see \cite[Theorem 1.1]{stoeva2022compactly}. Also, \cite[Proposition 1.2]{stoeva2022compactly} provides an iterative procedure in which every iterate is an exact dual window and, when the original window and the initial dual window are compactly supported, every iterate remains compactly supported and converges to the canonical dual window.  

Complementary results in the normalized setting $\alpha=1$ and $\supp(g)=[-1,1]$ were obtained by Lemvig and Nielsen. In particular, \cite[Lemma 3.3]{lemvig2019gabor} gives an explicit parametrization of all dual windows having sufficiently small support. This parametrization also yields characterizations of boundedness, continuity, and smoothness of the resulting dual windows. In particular, the authors show that a compactly supported dual window can be chosen with the same smoothness as the original window and that, in general, this degree of smoothness is optimal.

\subsection{The Case of $\beta = 2/3$}\label{sec: painful 2/3} 
To derive the minimal dual window, we will show that such a dual is necessarily the solution (pointwise) to a constrained $\ell^1$ minimization problem of the following form.

\begin{lemma}\label{lem: painful case min problem}
    Let $a,b,c,d\in \mathbb R\setminus\{0\}$ 
    be constants, and consider the optimization problem
    \begin{align*}
\min_{\R^3}\; &|x_1| + |x_2| + |x_3| \\
 \mathrm{subject \, to\ }\; & a x_1 + bx_2 = \tfrac{2}{3} \quad \mathrm{and} \quad cx_2 + dx_3 = 0.
\end{align*}
The solution occurs at the point $(\frac{2}{3a},0,0)$ if $\frac{2}{3|a|} \leq \frac{2}{3|b|}(1+\left|\frac{c}{d}\right|)$, or at the point $(0, \frac{2}{3b}, -\frac{2c}{3db})$ if $\frac{2}{3|a|} \geq \frac{2}{3|b|}(1+\left|\frac{c}{d}\right|)$. If equality holds, then every point on the line segment joining these two points is also a minimizer.
\end{lemma}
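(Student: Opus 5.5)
The plan is to eliminate the constraints and reduce the problem to minimizing a one-variable convex piecewise-linear function. Since $d\neq 0$, the second constraint gives $x_3=-\frac{c}{d}x_2$. Since $a\neq 0$, the first gives $x_1=\frac{1}{a}\bigl(\tfrac23-bx_2\bigr)$. The feasible set is therefore the line parametrized by $t:=x_2\in\R$, and the objective becomes
\begin{equation*}
\phi(t)=\frac{1}{|a|}\Bigl|\tfrac23-bt\Bigr| + |t|\Bigl(1+\Bigl|\frac{c}{d}\Bigr|\Bigr).
\end{equation*}
The function $\phi$ is convex and piecewise linear, with kinks only at $t=0$ and $t=t^*:=\frac{2}{3b}$. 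It tends to $+\infty$ as $|t|\to\infty$, since its slopes at $\pm\infty$ are $\pm\bigl(\frac{|b|}{|a|}+1+|c/d|\bigr)$, which are nonzero.

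A convex piecewise-linear coercive function attains its minimum at a kink. If two kinks both attain the minimum, the whole segment between them is minimizing. So I only need to evaluate $\phi$ at the two kinks:
\begin{equation*}
\phi(0)=\frac{2}{3|a|},\qquad \phi(t^*)=\frac{2}{3|b|}\Bigl(1+\Bigl|\frac{c}{d}\Bigr|\Bigr).
\end{equation*}
Undoing the substitution, $t=0$ corresponds to the point $(\frac{2}{3a},0,0)$, and $t=t^*$ corresponds to $(0,\frac{2}{3b},-\frac{2c}{3db})$. The minimizer is the kink with the smaller value, which gives exactly the dichotomy in the statement.

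The only step needing care is the argument that the minimum is attained at a kink, together with the equality case. I would handle it directly. Outside $[\min(0,t^*),\max(0,t^*)]$, both absolute-value terms grow as $t$ moves away from the interval, so $\phi$ is strictly larger there than at the nearer endpoint. On the interval itself, $\phi$ is affine, so its minimum is at an endpoint, and it is constant on the interval exactly when $\phi(0)=\phi(t^*)$. In that equality case, every $t$ between $0$ and $t^*$ is a minimizer. Because the parametrization $t\mapsto(x_1,x_2,x_3)$ is affine, these values of $t$ map onto the segment joining the two points, which completes the description.
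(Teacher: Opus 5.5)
Your proposal is correct and takes essentially the same route as the paper: eliminate $x_1,x_3$ using the constraints, reduce to a convex piecewise-linear function of $x_2$ with breakpoints at $0$ and $\frac{2}{3b}$, compare its values there, and get the equality case from the function being affine on the segment between the breakpoints. Your version is slightly more careful than the paper's. You keep the absolute values $|a|,|b|,|c/d|$ rather than assuming $a,b,c,d>0$ ``for simplicity'' as the paper does. You also justify explicitly why no minimizer lies outside the interval between the two breakpoints, which gives uniqueness in the strict cases.
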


We defer the proof of Lemma \ref{lem: painful case min problem} to the Appendix \ref{sec: appendix}, as it relies only on elementary optimization arguments and including it here would interrupt the flow of the exposition.

\begin{proof}[Proof of Theorem \ref{thm: main painful}] A window $h$ dual to $\mathcal{G}(g,1,2/3)$ for $\supp(g) \subseteq [-1,1]$ must satisfy the equations 
\begin{equation}\label{eq: painful-duality} \begin{aligned} 
g(x)h(x)+g(x+1)h(x+1) &=\tfrac{2}{3}, &&x\in[-1,0],\\
g(x-\tfrac12)h(x+1)+g(x+\tfrac12)h(x+2) &=0, &&x\in[-\tfrac12,0],\\ 
g(x+\tfrac12)h(x-1)+g(x+\tfrac32)h(x) &=0, &&x\in[-1,-\tfrac12]. 
\end{aligned} 
\end{equation}
Additionally, by Theorem \ref{thm: dual support} we may assume that $\supp(h) \subseteq A\cup B$ where
\begin{align*}
    A = \left[-\tfrac{1}{2},0\right]\cup \left[\tfrac{1}{2},1\right] \cup \left[\tfrac{3}{2},2\right]; \qquad 
    B = \left[-2,-\tfrac{3}{2}\right] \cup \left[-1,-\tfrac{1}{2}\right] \cup \left[0,\tfrac{1}{2}\right].
\end{align*}

Now, we write
\begin{align*}
    \|h\|_1 &= \int_\R |h(x)|dx \\
            &= \int_{A} |h(x)|dx + \int_{B} |h(x)|dx \\
            &= \int_{[-\frac{1}{2},0]} |h(x)| + |h(x+1)| + |h(x+2)|dx \\
            &+ \int_{[-1,-\frac{1}{2}]} |h(x-1)| + |h(x)| + |h(x+1)|dx.
\end{align*}
Thus, the minimal dual solves the following pointwise optimization problems: 

\noindent for all $x \in [-\frac{1}{2},0]$, solves
\begin{equation}\label{eq: painful-pointwise-A}
\begin{aligned}
\min_{\R^3}\; |h(x)| + |h(x+1)| + |h(x+2)| \\
 \text{subject to }\;  g(x)h(x) + g(x+1)h(x+1) &= \tfrac{2}{3}\\
              g(x-\tfrac{1}{2})h(x+1) + g(x+\tfrac{1}{2})h(x+2) &= 0,
\end{aligned}
\end{equation}
and for all $x \in [-1,-\frac{1}{2}]$, solves 
\begin{equation}\label{eq: painful-pointwise-B}
\begin{aligned}
\min_{\R^3}\; |h(x-1)| + |h(x)| + |h(x+1)| \\
 \text{subject to }\;  g(x)h(x) + g(x+1)h(x+1) &= \tfrac{2}{3}\\
              g(x+\tfrac{1}{2})h(x-1) + g(x+\tfrac{3}{2})h(x) &= 0.
\end{aligned}
\end{equation}

By Lemma \ref{lem: painful case min problem}, it follows that for $x \in [-1/2,0]$, if 
\begin{equation*}
\frac{2}{3|g(x)|} \leq \frac{2}{3|g(x+1)|}\bigg(1+ \left|\frac{g(x-\frac{1}{2})}{g(x+\frac{1}{2})}\right|\bigg),
\end{equation*}
then 
\begin{equation*}
h(x)= \frac{2}{3g(x)}, \quad h(x+1) =h(x+2) = 0.
\end{equation*}
Otherwise, we have 
\begin{equation*}
h(x) = 0, \; h(x+1) = \frac{2}{3g(x+1)}, \quad h(x+2) = -\frac{g(x-\frac{1}{2})}{g(x+\frac{1}{2})}\frac{2}{3g(x+1)}. 
\end{equation*} 
In the notation of the Theorem, this says that 
\begin{align*}
h|_{I_{[-1/2,0]}\cup (J_{[-1/2,0]}+1)} &= \frac{2}{3g(x)} \\ h|_{(J_{[-1/2,0]}+2)} &= -\frac{2g(x-\frac{5}{2})}{3g(x-\frac{3}{2})g(x-1)}.
\end{align*}

Similarly, for $x \in[-1,-1/2]$, we have if
\begin{equation*}
\frac{2}{3|g(x+1)|} \leq \frac{2}{3|g(x)|}\bigg(1
+ \left|\frac{g(x+\frac{3}{2})}{g(x+\frac{1}{2})}\right|\bigg),
\end{equation*}
then 
\begin{equation*}
h(x+1) = \frac{2}{3g(x+1)}, \quad h(x) = h(x-1) = 0.
\end{equation*}
Otherwise, 
\begin{equation*}
h(x+1) = 0,\; h(x) = \frac{2}{3g(x)}, \quad h(x-1) = \frac{g(x+\frac{3}{2})}{g(x+\frac{1}{2})}\frac{2}{3g(x)}.
\end{equation*}
Again, in the notation of the theorem, this says 
\begin{align*}
h|_{(I_{[-1,-1/2]}+1)\cup J_{[-1,-1/2]}} &= \frac{2}{3g(x)} \\
h|_{(J_{[-1,-1/2]}-1)} &= \dfrac{2g(x+\frac{5}{2})}{3g(x+\frac{3}{2})g(x+1)}.
\end{align*}

It remains to verify that $\mathcal G(h,1,\frac{2}{3})$ is a Bessel sequence. To that end, we will compare $h$ with the canonical dual window $\widetilde g$. By Theorem \ref{thm: dual support}, $\widetilde g$ is supported on $A\cup B$. Moreover, since $\mathcal G(\widetilde g,1,\frac{2}{3})$ is a Bessel sequence, the upper Gabor-frame estimate implies that $\widetilde g\in L^\infty(\mathbb R)$.
For each $x\in[-\frac{1}{2},0]$, the vector
\begin{equation*}
\bigl(\widetilde g(x),\widetilde g(x+1),\widetilde g(x+2)\bigr)
\end{equation*}
is feasible for the same finite-dimensional optimization problem \eqref{eq: painful-duality} as
\begin{equation*}
\bigl(h(x),h(x+1),h(x+2)\bigr).
\end{equation*}
Therefore, by the pointwise $\ell^1$-minimality of $h$ \eqref{eq: painful-pointwise-A},
\begin{equation*}
|h(x)|+|h(x+1)|+|h(x+2)|
\leq
|\widetilde g(x)|+|\widetilde g(x+1)|+|\widetilde g(x+2)|
\leq
3\|\widetilde g\|_\infty.
\end{equation*}
Similarly, for $x\in[-1,-\frac{1}{2}]$, looking now at the pointwise $\ell^1$-minimality of $h$ \eqref{eq: painful-pointwise-B}, 
\begin{equation*}
|h(x-1)|+|h(x)|+|h(x+1)|
\leq
3\|\widetilde g\|_\infty.
\end{equation*}
Since these two collections of translates cover the support set $A\cup B$, it follows that
\begin{equation*}
\|h\|_\infty\leq 3\|\widetilde g\|_\infty.
\end{equation*}
Thus, $h$ is bounded and compactly supported (and hence $h\in W(\mathbb R)$).
Therefore, $\mathcal G(h,1,\frac{2}{3})$ is a Bessel sequence, which completes the proof. 

\end{proof}

\begin{example}
    \emph{Let $g(x) = \sqrt{|x|}\mathbf{1}_{[-1,1]}(x)$. Set $a_1 = -\frac{3}{4}-\frac{\sqrt{5}}{20}\sqrt{16\sqrt{5}-35}$. and $a_2 = -\frac{1}{4}+\frac{\sqrt{5}}{20}\sqrt{16\sqrt{5}-35}$. Then, if $\alpha = 1$, $\beta = \frac{2}{3}$, the dual window with minimal $L^1$-norm is:} 
    \begin{equation*}h(x) = \begin{cases}
        \frac{\beta}{g(x)} & x \in [-1,a_1]\cup[-\frac{1}{2},a_2] \cup [1+a_1,\frac{1}{2}] \cup  [1+a_2,1]\\
        \frac{2}{3}\sqrt{\frac{5/2 - x}{(x-3/2)(x-1)}} & x \in [2+a_2, 2] \\
        \frac{2}{3}\sqrt{\frac{x+5/2}{(-x-3/2)(-x-1)}} & x \in [-2,a_1-1].
    \end{cases} 
    \end{equation*}
\end{example}

\smallskip

\subsection{The General Case}\label{sec: painful case}
We conclude with an observation about the structure of the problem in the general case, that is, for a general $\beta$ in the painful regime.

Let $\G(g,\alpha,\beta)$ be a Gabor frame with $\supp(g)\subseteq[-1,1]$, $\alpha=1$, and assume
$
\frac{M-1}{M}\leq \beta < \frac{M}{M+1}$,
$M\geq 2$.
By Lemma \ref{lem: Ole}, an $L^1$-minimizing dual window $h$
may be chosen supported on the associated truncation set $J$ (see \eqref{def: J Ole}).
Since $\alpha=1$, the Ron--Shen/Janssen equations
\eqref{eq: characterization dual} are $\mathbb Z$--periodic in $x$,
so it suffices to verify them on a fundamental domain,
which we take to be $[-1,0)$.
Because $J$ is bounded, for each fixed $x\in[-1,0)$ only finitely
many integer shifts $h(x+k)$ can be nonzero. Consequently, the
duality equations \eqref{eq: characterization dual} involve only finitely many unknowns. 
Therefore, the
minimization of $\|h\|_1$ reduces almost everywhere to a
finite-dimensional $\ell^1$ optimization problem for the vector
$$
(h(x+k_1),\dots,h(x+k_r)),
$$
that is, the samples of $h$ along the lattice $x+\mathbb Z$,
where $\{k_1,\dots,k_r\}=\{k\in\mathbb Z:\,x+k\in J\}$, for some $r=r(x)$ denoting the number of integer shifts whose
translates intersect $J$. 
Precisely, because $J$ is a bounded finite union of intervals, the set
$\{k\in\mathbb Z:\ x+k\in J\}$
changes only when $x$ crosses an endpoint of one of the translated intervals $J-k$. Hence the counting function
$
r(x)=\#\{k\in\mathbb Z:\ x+k\in J\}
$
is piecewise constant on $[-1,0)$, with possible jumps only at finitely many points. Therefore, up to a finite exceptional set, we may assume that $r(x)\equiv r$ is constant.

In particular, when $\beta=\frac{M-1}{M}$, we have that $J$ in \eqref{def: J Ole} becomes
$$
J=\left(\bigcup_{k=1}^{M-1}\left[-(k+1),-\tfrac{kM}{M-1}\right]\right)\cup[-1,1]\cup\left(\bigcup_{k=1}^{M-1}\left[\tfrac{kM}{M-1},k+1\right]\right).
$$
One has $r(x)=M$ for all
$x\in[-1,0)$ except possibly finitely many values of $x$. Indeed,
for $x\in[-1,0)$, the indices $k=0$ and $k=1$ always satisfy
$
x,\ x+1\in[-1,1]\subset J
$.
On the other hand, for each $k=1,\dots,M-2$, the two integers $-k$ and $M-k$ satisfy
$$
x-k\in \left[-(k+1),-\tfrac{kM}{M-1}\right]
\quad\Longleftrightarrow\quad
x\le -\tfrac{k}{M-1},
$$
and
$$
x+(M-k)\in \left[\tfrac{(M-1-k)M}{M-1},\,M-k\right]
\quad\Longleftrightarrow\quad
x\ge -\tfrac{k}{M-1}.
$$
Hence, away from the finitely many boundary points
($
\left\{-\frac{k}{M-1}:k=1,\dots,M-2\right\}
$),
exactly one of the two integers
$-k$ or $M-k$
contributes to the set
$
\{j\in\mathbb Z:\ x+j\in J\}
$.
Therefore,
$$
r(x)=2+(M-2)=M
$$
for all $x\in[-1,0)$ except possibly finitely many values. Thus, for almost every
$x$, the pointwise problem involves $M$ unknowns.
In addition, for each fixed $x\in[-1,0)$, only the equations in
\eqref{eq: characterization dual} corresponding to $n=0$ and to at most
$M-2$ additional values of $n$ can be nontrivial. Hence, for almost every
$x$, the pointwise duality conditions define a finite linear system with at
most $M-1$ equations in the $M$ unknowns
$$
u(x)=\big(h(x+k_1),\dots,h(x+k_M)\big).
$$
Therefore, for almost every $x$, one has to solve an optimization problem of
the form
$$
\min_{u\in\mathbb R^M}\|u\|_{\ell^1}
\qquad\text{subject to}\qquad
A(x)u=(\beta,0,\dots,0)^{\top},
$$
where the matrix $A(x)$ is determined by the non-vanishing samples of $g$ appearing in
\eqref{eq: characterization dual}. Thus, the search for an $L^1$-minimizing
dual window reduces pointwise to a family of finite-dimensional
$\ell^1$-minimization problems. The case $\beta=\frac23$ ($M=3$), treated
above, corresponds to minimizing $\|u\|_{\ell^1}$ in $\mathbb R^3$ subject
to two linear constraints.

\section{Gabor Frames with Diagonally Dominant Frame Operator}\label{sec: diag dominant}

Here we consider windows $g\in L^2(\R)$  no longer compactly supported but generating a Gabor frame $\G(g,\alpha,\beta)$ with diagonally dominant frame operator, for example, $g(x)=e^{-\pi x^2}$ with small $\beta$. If $h$ is a dual window to $\G(g,\alpha, \beta)$, then from \eqref{eq: characterization dual}
\begin{equation}\label{eq: = beta}
    \sum_{k\in\Z} {g(x+ k\alpha)}h(x+ k\alpha)=\beta \qquad \text{ for a.e. } x\in[0,\alpha].    
\end{equation}
Similarly as in \eqref{eq: > beta}, this implies 
\begin{equation}
   \sum_{k\in\Z}|h(x+ k\alpha)|\geq \frac{\beta}{\|g\|_\infty} \qquad \text{ for a.e. } x\in[0,\alpha],     
\end{equation}
from which it follows that
\begin{equation}\label{eq: bound approx case}
    \|h\|_1\geq\frac{\alpha\beta}{\|g\|_\infty}.
\end{equation}

In general, as defined in  \cite{christensen2010approximately}, 
an \textit{approximately dual frame} $\{h_k\}$ associated with a given frame $\{g_k\}$ is defined by satisfying
\begin{equation}\label{eq: approx dual}
\left\| f - \sum \langle f, g_k \rangle h_k \right\|_2
\leq \varepsilon \|f\|_2, \qquad \forall f \in L^2(\R),
\end{equation}
for some $\varepsilon < 1$, which controls the distortion in the frame duality condition \eqref{eq:dual_frame}.

In the case of Gabor systems $\G(g,\alpha,\beta)$, $\G(h,\alpha, \beta)$, assuming that the Walnut representation \eqref{eq: Walnut} holds (for instance, if the windows are in the Wiener amalgam space $g,h\in W(\mathbb R)$), the condition \eqref{eq: approx dual} reads as
\begin{equation}\label{eq: approx Gabor dual frame} 
    \left\|f-\frac{1}{\beta}\sum_{n\in \Z}G_n^{g,h} \, \cdot (T_{-n/\beta}f)\right\|_2\leq \varepsilon\|f\|_2 \qquad \forall f \in L^2(\R).
\end{equation}
If we impose \eqref{eq: = beta} to be satisfied exactly, i.e., $G_0^{g,h}=\beta$, then \eqref{eq: approx Gabor dual frame} reduces to 
\begin{equation}
    \left\|\frac{1}{\beta}\sum_{n\neq 0}G_n^{g,h} \, \cdot (T_{-n/\beta}f)\right\|_2\leq \varepsilon\|f\|_2 \qquad \forall f \in L^2(\R).
\end{equation}

Observe that for each $n\neq 0$, 
\begin{equation*}
    \left\|G_n^{g,h} \, \cdot (T_{-n/\beta}f)\right\|_2\leq \|G_n^{g,h}\|_\infty \|f\|_2.
\end{equation*}
So, to get the approximate-duality estimate \eqref{eq: approx Gabor dual frame} one can aim to obtain 
\begin{equation}\label{eq: eps relation}
    \frac{1}{\beta}\sum_{n\neq 0}\|G_n^{g,h}\|_\infty<\varepsilon.
\end{equation}
We state this observation in the following lemma:

\begin{lemma}\label{lem: crit for approx dual window}
    If $g,h \in L^2(\R)$ are such that the Walnut representation \eqref{eq: Walnut} for $S_{g,h}$ holds, and
    \begin{align*}
   G_0^{g,h}\equiv \beta, \qquad \|G_n^{g,h}\|_\infty\leq\epsilon_n \quad \text{ for } n\neq 0, \quad \text{ with } \quad \frac{1}{\beta}\sum\limits_{n\in \Z\setminus\{0\}} \epsilon_n < 1,
    \end{align*}
    then $h$ is an approximate dual window to $\G(g,\alpha,\beta)$.
\end{lemma}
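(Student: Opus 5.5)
The plan is to use the Walnut representation \eqref{eq: Walnut}, which by hypothesis holds for $S_{g,h}$, to write $I-S_{g,h}$ as a sum of weighted translation operators. Then we bound its operator norm by the triangle inequality, which yields \eqref{eq: approx dual} with $\varepsilon:=\frac{1}{\beta}\sum_{n\neq0}\epsilon_n<1$.

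First, for $f\in L^2(\R)$ we write
\begin{equation*}
S_{g,h}f(x)=\frac{1}{\beta}G_0^{g,h}(x)f(x)+\frac{1}{\beta}\sum_{n\neq0}G_n^{g,h}(x)f(x+\tfrac{n}{\beta}).
\end{equation*}
Since $G_0^{g,h}\equiv\beta$ a.e., the first term equals $f$. Hence $f-S_{g,h}f=-\frac1\beta\sum_{n\neq0}G_n^{g,h}\cdot T_{-n/\beta}f$.

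Second, each summand is bounded in $L^2$ norm by $\|G_n^{g,h}\|_\infty\|T_{-n/\beta}f\|_2\le\epsilon_n\|f\|_2$, because translation is an isometry and multiplication by an $L^\infty$ function has norm at most its sup norm. The series $\sum_{n\neq0}\epsilon_n$ converges, since it is bounded by $\beta$. Therefore the series $\sum_{n\neq 0} G_n^{g,h}\, T_{-n/\beta}f$ converges absolutely in $L^2(\R)$. Its sum coincides with the Walnut series, which a priori converges only in whatever sense the hypothesis provides (pointwise a.e., or in $L^2$ on a dense class). Minkowski's inequality then gives
\begin{equation*}
\|f-S_{g,h}f\|_2\le\frac1\beta\sum_{n\neq0}\epsilon_n\,\|f\|_2=\varepsilon\|f\|_2 .
\end{equation*}
This is exactly \eqref{eq: approx Gabor dual frame} with $\varepsilon<1$.

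Finally, we must match this with the definition \eqref{eq: approx dual} for the Gabor families, i.e.\ identify $\sum\langle f,M_{m\beta}T_{\alpha n}g\rangle M_{m\beta}T_{\alpha n}h$ with $S_{g,h}f$. This is precisely what "the Walnut representation holds" means in the statement, so it is immediate.

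The only delicate point is the mode of convergence in the identification of the two series. If the Walnut formula is only assumed on a dense subspace (e.g.\ bounded compactly supported $f$), I would first prove the estimate there. I would then extend it by continuity, using that $S_{g,h}$ is bounded when $g,h$ generate Bessel sequences (e.g.\ $g,h\in W(\R)$) and that the right-hand operator is bounded by the estimate above.
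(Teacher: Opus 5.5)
Your proposal is correct and follows the same route as the paper: you use the Walnut representation with $G_0^{g,h}\equiv\beta$ to reduce $f-S_{g,h}f$ to $-\frac{1}{\beta}\sum_{n\neq0}G_n^{g,h}\cdot T_{-n/\beta}f$, then bound each term by $\|G_n^{g,h}\|_\infty\|f\|_2\le\epsilon_n\|f\|_2$ and sum. Your remarks on how the Walnut series converges and on extending the estimate by density tighten a point the paper leaves implicit.
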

For the rest of this section, we will assume that our window function $g$ lies in the class of window functions $\mathcal{V}$ used in \cite{christensen2018approximately}. 

\begin{definition} The class $\mathcal{V}$ consists of all continuous $g\in L^2(\R)$ such that $g$ is positive valued, even, decreasing on $[0,\infty)$, and  
\begin{equation*}
g(x) \leq \frac{c}{1+|x|^{\sigma+1}}
\end{equation*}
for some constants $c,\sigma > 0$.
\end{definition}

\begin{lemma}\label{lem: g in V}
Let $g\in\mathcal{V}$, and fix $\alpha,r,H>0$ and
$0<\epsilon<1$. Then there exists $
\beta_0=\beta_0(g,\alpha,r,H,\epsilon)>0
$
such that, for every $0<\beta<\beta_0$ and every
$h\in L^2(\R)$ satisfying
\begin{equation}\label{eq: support and bound cond}
\supp(h)\subseteq[-r,r] \qquad \text{ and } \qquad
\|h\|_\infty\leq H,    
\end{equation}
we have
$$
\frac{1}{\beta}
\sum_{n\neq 0}\|G_n^{g,h}\|_\infty
\leq\epsilon.
$$
In particular, if
$G_0^{g,h}(x)\equiv\beta,
$
then $h$ is an approximate dual window to
$\G(g,\alpha,\beta)$.
\end{lemma}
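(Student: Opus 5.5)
The plan is to bound each $\|G_n^{g,h}\|_\infty$ for $n\neq 0$ directly, using only the support and sup-norm information on $h$ together with the decay of $g\in\mathcal V$. Then we sum over $n$ and show that the total, divided by $\beta$, tends to $0$ as $\beta\to 0$.

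\textbf{Step 1: a pointwise bound on $G_n^{g,h}$.} Fix $n\neq 0$ and $x\in\R$. Since $\supp(h)\subseteq[-r,r]$, only the indices $k$ with $|x+\alpha k|\le r$ contribute to $G_n^{g,h}(x)$, and there are at most $2r/\alpha+1$ of them. For each such $k$ we have $|x+\tfrac n\beta+\alpha k|\ge \tfrac{|n|}{\beta}-r$. Because $g$ is even and decreasing on $[0,\infty)$, this gives
\[
|G_n^{g,h}(x)|\le H\Big(\tfrac{2r}{\alpha}+1\Big)\, g\Big(\tfrac{|n|}{\beta}-r\Big)
\qquad\text{whenever } \tfrac{|n|}{\beta}>r .
\]
For $\beta<1/(2r)$ we have $\tfrac{|n|}{\beta}-r\ge \tfrac{|n|}{2\beta}$ for every $n\neq 0$. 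Hence
\[
\|G_n^{g,h}\|_\infty\le C_0\, g\big(\tfrac{|n|}{2\beta}\big), \qquad C_0:=H\big(\tfrac{2r}{\alpha}+1\big).
\]

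\textbf{Step 2: summing over $n$.} Using the decay bound $g(y)\le c/(1+|y|^{\sigma+1})\le c\,|y|^{-\sigma-1}$, we get
\[
\frac1\beta\sum_{n\neq0}\|G_n^{g,h}\|_\infty\le \frac{2C_0c}{\beta}\sum_{n\ge1}\Big(\frac{2\beta}{n}\Big)^{\sigma+1} = 2C_0c\,2^{\sigma+1}\zeta(\sigma+1)\,\beta^{\sigma}.
\]
This expression tends to $0$ as $\beta\to0$, because $\sigma>0$ makes $\zeta(\sigma+1)$ finite. We therefore choose $\beta_0$ to be the minimum of $1/(2r)$ and the value of $\beta$ at which the right-hand side equals $\epsilon$. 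This $\beta_0$ depends only on $g$ (through $c$ and $\sigma$), $\alpha$, $r$, $H$, and $\epsilon$, and in particular not on $h$.

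\textbf{Step 3: the final claim.} Suppose in addition that $G_0^{g,h}\equiv\beta$. We take $\epsilon_n:=\|G_n^{g,h}\|_\infty$ for $n\neq 0$; by Step 2, $\frac1\beta\sum_{n\neq0}\epsilon_n\le\epsilon<1$. We also need the Walnut representation to hold. It does: $g\in\mathcal V$ is continuous with polynomial decay of order $\sigma+1>1$, so $g\in W(\R)$, and $h$ is bounded with compact support, so $h\in W(\R)$ as well. Lemma \ref{lem: crit for approx dual window} then gives that $h$ is an approximate dual window.

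The only delicate point is Step 1. There we must make sure that the shift $n/\beta$ pushes the argument of $g$ uniformly away from the origin for all $n\neq0$ at once. This is exactly what the condition $\beta<1/(2r)$ achieves, and it lets us use the monotonicity of $g$ rather than the pointwise decay estimate alone.
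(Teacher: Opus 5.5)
Your proof is correct and follows essentially the same route as the paper's. You count at most $2r/\alpha+1$ contributing translates, and use evenness, monotonicity and $\beta<1/(2r)$ to get $\|G_n^{g,h}\|_\infty\le H(2r/\alpha+1)\,c\,(2\beta)^{1+\sigma}|n|^{-1-\sigma}$. Summing gives an $O(\beta^{\sigma})$ bound uniform in $h$, and you conclude via $g,h\in W(\R)$ and Lemma~\ref{lem: crit for approx dual window}. The differences are only cosmetic: you pass to $g(|n|/(2\beta))$ by monotonicity before using the decay bound (the paper applies it at $|n|/\beta-r$ directly), and you count $2r/\alpha+1$ terms where the paper uses $1+\lceil 2r/\alpha\rceil$.
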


\begin{proof}[Proof of Lemma \ref{lem: g in V}]
Let $h\in L^2(\R)$ be any function satisfying \eqref{eq: support and bound cond}.
Set
$$
N_{r,\alpha}
:=
1+\Big\lceil \tfrac{2r}{\alpha}\Big\rceil.
$$
Fix $n\neq 0$. By definition,
$$
G_n^{g,h}(x)
=
\sum_{k\in\Z}
g\left(x+\frac{n}{\beta}+\alpha k\right)
h(x+\alpha k).
$$
Since $\supp(h)\subseteq[-r,r]$, the summand can be nonzero only if
$x+\alpha k\in[-r,r]$, i.e.,
$$
\frac{-r-x}{\alpha}
\leq k
\leq
\frac{r-x}{\alpha}.
$$
Hence, for each fixed $x$, there are at most $N_{r,\alpha}$
nonzero terms in the sum. Therefore,
$$
|G_n^{g,h}(x)|
\leq
H N_{r,\alpha}
\sup_{y\in[-r,r]}
\left|g\left(y+\frac{n}{\beta}\right)\right|.
$$
Taking the supremum over $x$ yields
$$
\|G_n^{g,h}\|_\infty
\leq
H N_{r,\alpha}
\sup_{y\in[-r,r]}
\left|g\left(y+\frac{n}{\beta}\right)\right|.
$$

Now assume in addition that
$$
0<\beta
\leq
\min\left\{1,\frac{1}{2r}\right\}.
$$
Then, for every $n\neq0$, we have
$$
\frac{|n|}{\beta}-r
\geq
\frac{|n|}{2\beta},
$$
and since $g\in\mathcal V$, there exist constants $c>0$ and
$\sigma>0$ such that
$$
|g(t)|
\leq
\frac{c}{1+|t|^{1+\sigma}},
\qquad t\in\R.
$$
Because $g$ is even and decreasing on $[0,\infty)$,
\begin{align*}
\sup_{y\in[-r,r]}
\left|g\left(y+\frac{n}{\beta}\right)\right|
&=
\begin{cases}
g\left(\frac{n}{\beta}-r\right),
& n>0,\\ 
g\left(\frac{n}{\beta}+r\right),
& n<0,
\end{cases}\\
&\leq
\frac{c}{
1+\left(\frac{|n|}{\beta}-r\right)^{1+\sigma}
}\\
&\leq
\frac{c(2\beta)^{1+\sigma}}{|n|^{1+\sigma}}.
\end{align*}
Consequently,
$$
\frac{1}{\beta}
\|G_n^{g,h}\|_\infty
\leq
H N_{r,\alpha}c\,2^{1+\sigma}\beta^\sigma
\frac{1}{|n|^{1+\sigma}}.
$$
Summing over $n\neq0$, we obtain
\begin{equation}\label{eq: aux series}
\frac{1}{\beta}
\sum_{n\neq0}\|G_n^{g,h}\|_\infty
\leq
H N_{r,\alpha}c\,2^{1+\sigma}\beta^\sigma
\sum_{n\neq0}\frac{1}{|n|^{1+\sigma}}.
\end{equation}
Since $\sigma>0$, the series
$$
\sum_{n\neq0}\frac{1}{|n|^{1+\sigma}}
$$
converges, and the right-hand side of
\eqref{eq: aux series} tends to $0$ as $\beta\to0^+$.
Choose $\beta_0>0$ such that
$$
\beta_0
\leq
\min\left\{
1,
\frac{1}{2r},
\left(
\tfrac{\epsilon}{
H N_{r,\alpha}c\,2^{1+\sigma}
\displaystyle\sum_{n\neq0}\tfrac{1}{|n|^{1+\sigma}}
}
\right)^{1/\sigma}
\right\}.
$$
Then, for every $0<\beta<\beta_0$,
$$
H N_{r,\alpha}c\,2^{1+\sigma}\beta^\sigma
\sum_{n\neq0}\frac{1}{|n|^{1+\sigma}}
\leq\epsilon.
$$
Since this choice depends on $h$ only through the
fixed constants $r$ and $H$, the same $\beta_0$ works uniformly
for every $h$ satisfying the stated support and boundedness
conditions \eqref{eq: support and bound cond}.

Moreover, the decay assumption on $g$ implies that
$g\in W(\R)$, while every bounded, compactly supported $h$ belongs
to $W(\R)$. Hence the corresponding Gabor systems are Bessel
sequences and the Walnut representation is valid. Therefore, if
$G_0^{g,h}\equiv\beta$, the approximate-duality criterion stated in Lemma \ref{lem: crit for approx dual window} implies that $h$ is an approximate dual window to
$\G(g,\alpha,\beta)$.
\end{proof}

The following theorem demonstrates that for $g\in\mathcal{V}$, we can
find approximate dual windows to $\G(g,\alpha,\beta)$ with nearly
minimal $L^1$-norms.

\begin{theorem}\label{thm: diag dom}
Let $g\in\mathcal{V}$.
For every $\alpha>0$, there exists 
$\beta_0=\beta_0(\alpha,g)>0$
such that, for every $0<\beta<\beta_0$, the function
\begin{equation}\label{eq: h alpha beta}
   h_{\alpha,\beta}(x)
:=
\frac{\beta}{g(x)}
\mathbf{1}_{[-\frac{\alpha}{2},\frac{\alpha}{2}]}(x) 
\end{equation}
is an approximate dual window to $\G(g,\alpha,\beta)$. Moreover,
$$
\frac{
\|{h_{\alpha,\beta}}\|_1
-
\alpha\beta/\|g\|_\infty
}{
\alpha\beta/\|g\|_\infty
}
\longrightarrow0
\qquad\text{as }\alpha\to0,
$$
and such convergence is independent
of $\beta$ (in particular, it holds for any choice
$\beta=\beta(\alpha)$ satisfying $0<\beta(\alpha)<\beta_0(\alpha,g)$).
\end{theorem}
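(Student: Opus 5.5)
The plan is to apply Lemma \ref{lem: g in V} with $r=\alpha/2$ and a uniform sup bound $H$ on $h_{\alpha,\beta}$. Then we compute $\|h_{\alpha,\beta}\|_1$ directly and compare it with the lower bound \eqref{eq: bound approx case}.

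\textbf{Step 1 (exact $n=0$ condition).} The support of $h_{\alpha,\beta}$ is $[-\alpha/2,\alpha/2]$, an interval of length exactly $\alpha$. So for a.e. $x\in[0,\alpha)$ there is exactly one $k\in\Z$ with $x+\alpha k\in[-\alpha/2,\alpha/2]$. Since $g>0$ everywhere ($g\in\mathcal V$ is positive), we get
\[
G_0^{g,h_{\alpha,\beta}}(x)=g(x+\alpha k)\cdot\frac{\beta}{g(x+\alpha k)}=\beta
\]
almost everywhere.

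\textbf{Step 2 (apply the lemma).} Because $g$ is even and decreasing on $[0,\infty)$, we have $g\ge g(\alpha/2)>0$ on $[-\alpha/2,\alpha/2]$. Hence
\[
\|h_{\alpha,\beta}\|_\infty\le \beta/g(\alpha/2)\le 1/g(\alpha/2)=:H
\]
whenever $\beta\le1$. This $H$ depends only on $\alpha$ and $g$. We fix some $0<\epsilon<1$, say $\epsilon=1/2$. Lemma \ref{lem: g in V} with $r=\alpha/2$ and this $H$ then yields $\beta_0(g,\alpha,\alpha/2,H,\epsilon)$, and we take the minimum of it with $1$. For $0<\beta<\beta_0$ the lemma gives $\frac1\beta\sum_{n\ne0}\|G_n^{g,h}\|_\infty\le\epsilon<1$. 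Together with $G_0\equiv\beta$, Lemma \ref{lem: crit for approx dual window} (as invoked inside Lemma \ref{lem: g in V}) shows that $h_{\alpha,\beta}$ is an approximate dual window. The one point needing care is that $H$ must not depend on $\beta$; we secure this by bounding $\beta\le1$ before choosing $\beta_0$.

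\textbf{Step 3 (norm asymptotics).} Since $g$ is even and decreasing, $\|g\|_\infty=g(0)$. Then
\[
\frac{\|h_{\alpha,\beta}\|_1-\alpha\beta/g(0)}{\alpha\beta/g(0)}=\frac{g(0)}{\alpha}\int_{-\alpha/2}^{\alpha/2}\frac{dx}{g(x)}-1 .
\]
In this ratio $\beta$ cancels exactly, which gives the claimed independence from $\beta$. By continuity and positivity of $g$ at $0$, the function $1/g$ is continuous near $0$, so the average $\frac1\alpha\int_{-\alpha/2}^{\alpha/2}1/g$ tends to $1/g(0)$ as $\alpha\to0$, and the ratio tends to $0$. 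For a quantitative version, monotonicity gives the bound $0\le \text{ratio}\le g(0)/g(\alpha/2)-1$.

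I expect no real obstacle; the only subtlety is the uniformity of $H$ handled in Step 2.
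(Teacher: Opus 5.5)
Your proposal is correct and follows the paper's proof essentially step by step: the same application of Lemma~\ref{lem: g in V} with $r=\alpha/2$ and the $\beta$-independent bound $H=1/\min_{|x|\le\alpha/2}g=1/g(\alpha/2)$ secured by $\beta\le 1$, the same exact computation $G_0^{g,h_{\alpha,\beta}}\equiv\beta$, and the same cancellation of $\beta$ in the relative-error ratio. Your monotonicity bound $0\le \text{ratio}\le g(0)/g(\alpha/2)-1$ is a small quantitative extra not stated in the paper.
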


\begin{proof}[Proof of Theorem \ref{thm: diag dom}]
Fix $\alpha>0$, and set
$$
m_\alpha
:=
\min_{|x|\leq\alpha/2}g(x).
$$
Since $g$ is continuous and strictly positive, we
have $m_\alpha>0$. For every $0<\beta\leq1$, the function
$h_{\alpha,\beta}$ satisfies
$$
\supp(h_{\alpha,\beta})
\subseteq
\left[-\tfrac{\alpha}{2},\tfrac{\alpha}{2}\right]
\qquad \text{ and }\qquad 
\|h_{\alpha,\beta}\|_\infty
\leq
\tfrac{\beta}{m_\alpha}
\leq
\tfrac{1}{m_\alpha}.
$$
Thus, Lemma \ref{lem: g in V}, applied with
$r=\tfrac{\alpha}{2}$, $H=\tfrac{1}{m_\alpha}$ (and a fixed $0<\epsilon<1$ for instance $\epsilon=\frac{1}{2}$),
yields a number $\beta_0=\beta_0(\alpha,g)>0$ such that the conclusion of
the lemma holds for every $0<\beta<\beta_0$.

In addition, for a.e. $x\in[0,\alpha)$, there is exactly one
$k\in\mathbb{Z}$ such that $x+\alpha k
\in
\left[-\frac{\alpha}{2},\frac{\alpha}{2}\right]$,
and hence
$$
G_0^{g,{h_{\alpha,\beta}}}(x)
=
\sum_{k\in\mathbb{Z}}
g(x+\alpha k)
{h_{\alpha,\beta}}(x+\alpha k)
=
\beta.
$$
Lemma \ref{lem: g in V} therefore implies that
$h_{\alpha,\beta}$ is an approximate dual window to
$\G(g,\alpha,\beta)$.

To prove the second statement, observe that $g$ is positive, even,
and decreasing on $[0,\infty)$, so $\|g\|_\infty=g(0)$.
Moreover,
$$
\|{h_{\alpha,\beta}}\|_1
=
\beta
\int_{-\alpha/2}^{\alpha/2}
\frac{1}{g(x)}\,dx.
$$
Consequently,
$$
{
\frac{
\|h_{\alpha,\beta}\|_1
}{
\alpha\beta/\|g\|_\infty
}
=
\frac{\|g\|_\infty}{\alpha}
\int_{-\alpha/2}^{\alpha/2}
\frac{1}{g(x)}\,dx
}
\longrightarrow
\frac{\|g\|_\infty}{g(0)}
=
1
$$
as $\alpha\to0$. Therefore,
$$
\frac{
\|h_{\alpha,\beta}\|_1
-
\alpha\beta/\|g\|_\infty
}{
\alpha\beta/\|g\|_\infty
}
\longrightarrow0
\qquad\text{as }\alpha\to0.
$$
Since the expression on the left-hand side is
independent of $\beta$, the convergence holds uniformly with respect
to $\beta>0$, and in particular along any admissible choice
$0<\beta(\alpha)<\beta_0(\alpha,g)$.
\end{proof}

\begin{remark}
Equation \eqref{eq: bound approx case} shows that
$
{\alpha\beta}/{\|g\|_\infty}
$
is a natural lower bound for the $L^1$-norm of any dual window $h$ satisfying
\eqref{eq: = beta}. Theorem \ref{thm: diag dom} shows that the approximate dual
windows $h_{\alpha,\beta}$, defined by \eqref{eq: h alpha beta}, asymptotically attain this bound, in the sense that their
relative error with respect to $\alpha\beta/\|g\|_\infty$ tends to zero as
$\alpha\to 0$.
\end{remark}

\begin{example}[Gaussian window]
    In the case $g(x)=e^{-\pi x^2}$ ($\|g\|_\infty$=1), 
    let $h_{\alpha,\beta}$ as in \eqref{eq: h alpha beta}:
    $$h_{\alpha,\beta}(x)=\frac{\beta}{g(x)}\mathbf{1}_{[-\frac{\alpha}{2},\frac{\alpha}{2}]}(x)=\beta e^{\pi x^2}\mathbf{1}_{[-\frac{\alpha}{2},\frac{\alpha}{2}]}(x).$$
    As above, we have
    \begin{equation*}
        \left|\frac{\|h_{\alpha,\beta}\|_1-\alpha\beta}{\alpha\beta}\right|\longrightarrow 0 \qquad \text{as } \alpha\to 0.
    \end{equation*}
    Moreover, $\G(h_{\alpha,\beta},\alpha,\beta)$ defines an approximately dual frame for $G(e^{-\pi x^2},\alpha,\beta)$ for sufficiently small $\beta$. Indeed, this can be checked more explicitly than the general estimates of Lemma \ref{lem: g in V}.
    
    For a.e. $x\in [0,\alpha)$, there is exactly one $k_x\in \Z$ such that $x+\alpha k_x\in [-\frac{\alpha}{2},\frac{\alpha}{2}]$. So, for $n\neq 0$,
    \begin{align*}
        G_n^{g,h_{\alpha,\beta}}(x)&=\beta\sum_{k\in \Z}e^{-\pi (x+\alpha k +\frac{n}{\beta})^2}e^{\pi (x+\alpha k)^2}\mathbf{1}_{[-\frac{\alpha}{2},\frac{\alpha}{2}]}(x+\alpha k) \\&=\beta e^{-\pi\left(\frac{n^2}{\beta^2}+2(x+k_x\alpha)\frac{n}{\beta}\right)}\\
        &\leq \beta e^{-\pi\left(\frac{n^2}{\beta^2}-\frac{|n|\alpha}{\beta}\right)}
    \end{align*}
    Thus, in the view of \eqref{eq: eps relation}, we get that for $0\leq\beta\leq \frac{1}{2\alpha}$ 
    \begin{equation}\label{eq: bound G_n h_alpha}
         \frac{1}{\beta}\sum_{n\neq 0}\|G_n^{g,h_{\alpha,\beta}}\|_\infty=\sum_{n\neq 0}e^{-\pi\left(\frac{n^2}{\beta^2}-\frac{|n|\alpha}{\beta}\right)}\leq 2\sum_{n=1}^\infty e^{-\frac{\pi n}{2\beta^2}}=\frac{2e^{-\frac{\pi}{2\beta^2}}}{1-e^{-\frac{\pi}{2\beta^2}}}.
    \end{equation}
    It can be checked that the RHS in \eqref{eq: bound G_n h_alpha} is less than 1 so long as $\beta < \sqrt{\frac{\pi}{\log{9}}}$.
        Hence, for all $\beta < \min\{\frac{1}{2\alpha}, \sqrt{\frac{\pi}{\log{9}}}\}$, $\G(\beta e^{\pi x^2}\mathbf{1}_{[-\frac{\alpha}{2},\frac{\alpha}{2}]}(x), \alpha, \beta)$ is an approximate dual frame to $\G(e^{-\pi x^2}, \alpha, \beta)$. 
\end{example}

\section{Appendix}\label{sec: appendix}

The following is the proof of the optimization auxiliary lemma utilized in Section \ref{sec: painful 2/3}.

\begin{proof}[Proof of Lemma \ref{lem: painful case min problem}]
For simplicity assume $a,b,c,d>0$. From the second constraint we obtain
\begin{equation*}
x_3=-\tfrac{c}{d}x_2,
\end{equation*}
and substituting into the first constraint gives
\begin{equation*}
x_1=\frac{\frac23-bx_2}{a}.
\end{equation*}
Therefore the minimization problem reduces to the one-variable problem
\begin{equation*}
\min_{x_2\in\R} F(x_2),
\qquad
F(x_2):=\frac{1}{a}\left|\frac23-bx_2\right|
+\left(1+\frac{c}{d}\right)|x_2|.
\end{equation*}

Since $F$ is convex and piecewise linear, its minimum is attained at a breakpoint or on an interval where the slope is zero. The only breakpoints are
\begin{equation*}
x_2=0
\qquad\text{and}\qquad
x_2=\tfrac{2}{3b}.
\end{equation*}
Thus it is enough to compare the values of $F$ at these two points.

If $x_2=0$, then $x_1=\tfrac{2}{3a}$, $x_3=0$, $
F(0)=\tfrac{2}{3a}$.

If $x_2=\frac{2}{3b}$, then
$x_1=0$, $x_3=-\tfrac{c}{d}\tfrac{2}{3b}=-\tfrac{2c}{3bd}$,
so $F\!\left(\tfrac{2}{3b}\right)
=
\tfrac{2}{3b}\left(1+\tfrac{c}{d}\right)$.

Hence,
\begin{equation*}
F(0)<F\!\left(\tfrac{2}{3b}\right)
\quad\Longleftrightarrow\quad
\tfrac{2}{3a}<\tfrac{2}{3b}\left(1+\tfrac{c}{d}\right),
\end{equation*}
in which case the unique minimizer is $\left(\tfrac{2}{3a},0,0\right)$; and
\begin{equation*}
F(0)>F\!\left(\tfrac{2}{3b}\right)
\quad\Longleftrightarrow\quad
\tfrac{2}{3a}>\tfrac{2}{3b}\left(1+\tfrac{c}{d}\right),
\end{equation*}
in which case the unique minimizer is $\left(0,\frac{2}{3b},-\frac{2c}{3bd}\right)$.

Finally, if
\begin{equation*}
\tfrac{2}{3a}=\tfrac{2}{3b}\left(1+\tfrac{c}{d}\right),
\end{equation*}
then the two endpoint values coincide. Since $F$ is convex and affine on the interval
\begin{equation*}
\left[0,\tfrac{2}{3b}\right],
\end{equation*}
it follows that every point on the corresponding line segment between
\begin{equation*}
\left(\tfrac{2}{3a},0,0\right)
\quad\text{and}\quad
\left(0,\tfrac{2}{3b},-\tfrac{2c}{3bd}\right)
\end{equation*}
is also a minimizer.
\end{proof}

The following is a minor adaptation of the example provided at the end  of \cite{bolcskei1999necessary}, which we revise here to justify that,  in the painful case, the canonical dual could have compact support (although examples like this one are ``rare'' as mentioned in \cite{bolcskei1999necessary} at the end of Sec. 2).

\begin{example}\label{example compact supp}
Consider the Gabor system with
\begin{equation*}
\alpha=1,\qquad \beta=\frac23,\qquad g(x)=\mathbf{1}_{[-1,1]}(x).
\end{equation*}
This is a painful case, since $\alpha\beta=\frac23>\frac{1}{2}$, so the frame operator is no longer a multiplication operator. Nevertheless, the canonical dual window can be computed explicitly and we will see that it is still compactly supported.
Using the Walnut representation, the canonical dual $\widetilde{g}$ is determined by the equation
$S_{g,g}\widetilde{g}=g$.
For this particular rectangle window,  only
$n=0,\pm1$ contribute a.e. in \eqref{eq: Walnut}, because shifts by $3$ or more do not overlap.
One can compute that, for a.e. $x$,
\begin{equation*}
G_0^{g,g}(x)=2, \qquad
G_1^{g,g}(x)=
\begin{cases}
1, & x\in [m,m+\tfrac12),\\ 
0, & x\in [m+\tfrac12,m+1),
\end{cases}
\qquad
G_{-1}^{g,g}(x)=
\begin{cases}
0, & x\in [m,m+\tfrac12),\\ 
1, & x\in [m+\tfrac12,m+1),
\end{cases}
\end{equation*}
for each $m\in\mathbb Z$.
So the equation $S_{g,g}\widetilde{g}=g$ becomes:
\begin{equation*}
    \begin{cases}
    3\widetilde{g}(x)+\frac32\,\widetilde{g}\!\left(x+\frac32\right)=g(x), &  \text{ on each left half-interval } [m,m+\tfrac12), \\
    3\widetilde{g}(x)+\frac32\,\widetilde{g}\!\left(x-\frac32\right)=g(x) & \text{ on each right half-interval } [m+\tfrac12,m+1)
    \end{cases}
\end{equation*}
Therefore, one obtains that  the canonical dual is the piecewise constant function
\begin{equation*}
\widetilde{g}(x)=
\begin{cases}
-\frac{2}{9}, & x\in[-2,-\tfrac32)\cup[\tfrac32,2),\\ 
\frac{2}{9}, & x\in[-1,-\tfrac12)\cup[\tfrac12,1),\\ 
\frac{4}{9}, & x\in[-\tfrac12,\tfrac12),\\ 
0, & \text{otherwise}.
\end{cases}
\end{equation*}
up to values at the endpoints.
In particular, $\supp(\widetilde{g})=[-2,2]$,
so the canonical dual is compactly supported, even though we are outside the painless regime.
\end{example}

\bibliographystyle{siam}
\bibliography{references}
\end{document}